\def\inte#1{
\displaystyle\mathop{#1\kern0pt}^\circ }
\def\virgp{\raise 2pt\hbox{,}}
\def\cdotpv{\raise 2pt\hbox{;}}
\def\C{\mathop{\mathbb C\kern 0pt}\nolimits}
\def\DD{\mathop{\mathbb D\kern 0pt}\nolimits}
\def\EE{\mathop{{\mathbb E \kern 0pt}}\nolimits}
\def\K{\mathop{\mathbb K\kern 0pt}\nolimits}
\def\N{\mathop{\mathbb N\kern 0pt}\nolimits}
\def\Q{\mathop{\mathbb Q\kern 0pt}\nolimits}
\def\R{\mathop{\mathbb R\kern 0pt}\nolimits}
\def\SS{\mathop{\mathbb S\kern 0pt}\nolimits}
\def\ZZ{\mathop{\mathbb Z\kern 0pt}\nolimits}
\def\TT{\mathop{\mathbb T\kern 0pt}\nolimits}
\def\P{\mathop{\mathbb P\kern 0pt}\nolimits}
\newcommand{\beq}{\begin{equation}}
\newcommand{\eeq}{\end{equation}}
\newcommand{\ben}{\begin{eqnarray}}
\newcommand{\een}{\end{eqnarray}}
\newcommand{\beno}{\begin{eqnarray*}}
\newcommand{\eeno}{\end{eqnarray*}}
\newtheorem{defi}{Definition}[section]
\newtheorem{thm}{Theorem}[section]
\newtheorem{lem}{Lemma}[section]
\newtheorem{rmk}{Remark}[section]
\newtheorem{prop}{Proposition}[section]
\renewcommand{\theequation}{\thesection.\arabic{equation}}
\begin{document}

\title[]
{Global regularity for the 3D Hall-MHD equations with low regularity axisymmetric data}

\author[]{Zhouyu Li}
\address[Z. Li]{Department of Applied Mathematics, Northwestern Polytechnical University, Xi'an 710129, China}
\email{zylimath@163.com}

\author[]{Pan Liu}
\address[P. Liu]{School of Mathematics and Statistics, Beijing Institute of Technology, Beijing 100081, China}
\email{3120185715@bit.edu.cn}

\begin{abstract}
In this paper, we consider the global well-posedness of the incompressible Hall-MHD equations in $\mathbb{R}^3$. We prove that the solution of this system is globally regular if the initial data
is axisymmetric and the swirl components of the velocity and magnetic vorticity are trivial.
It should be pointed out that the initial data without any smallness and  in low regularity spaces.
This improves a previous result established in \cite{Fan2013}.
\end{abstract}

%%%%%%%%%%%%%%

\date{}

\maketitle

%%%%%%%%%%%%%%%%%%%%%%%
%\tableofcontents

\noindent {\sl Keywords:} Hall-MHD equations; Axisymmetric solutions; Global regularity

\vskip 0.2cm

\noindent {\sl AMS Subject Classification (2000):} 35Q35, 76D03.  \\
%%%%%%%%%%%%%%%%%%%%%%%%%%%%%%%%%%%%%%%%%%%%%%%%%%%%%%%%%%%%
\renewcommand{\theequation}{\thesection.\arabic{equation}}
\setcounter{equation}{0}
%%%%%%%%%%%%%%%%%%%%%%%%%%%%%%%%%%%%%%%%%%%%%%%%%%%%%%%%%%%%
\section{Introduction}
This paper is concerned with the following 3D incompressible Hall-MHD equations
\begin{equation}\label{Hall-MHD}
    \begin{cases}
    \partial_t u + u\cdot \nabla u + \nabla P - \Delta u  = B\cdot \nabla B,\quad (t, x)\in \mathbb{R}^+\times \mathbb{R}^3,\\
    \partial_t B + u\cdot \nabla B + \operatorname{curl}(\operatorname{curl}B \times B)
         = \Delta B+ B\cdot \nabla u ,\\
    \operatorname{div} u = \operatorname{div} B = 0,
    \end{cases}
\end{equation}
with the initial data $\left( u(x, 0), B(x, 0)\right)=\left( u_0(x), B_0(x)\right)$.
Here $u$ and $B$ denote the fluid velocity field and the magnetic field respectively; the scalar function $P$ denotes the pressure.

The Hall term $\operatorname{curl}(\operatorname{curl}B \times B)$ is derive from Ohm's law and
describe deviation from charge neutrality between the electrons and the ions. Hence, the Hall-MHD equations can describe some physical phenomena that can not be characterized appropriately by the classical MHD equations, for example, magnetic
reconnection in plasmas, neutron stars, star formation, etc. For more physical background, see \cite{H2005, L1960}.

Recently, there is been tremendous interest in developing the Hall-MHD system. In \cite{A2011}, Acheritogaray et al. gave
a derivation of the Hall-MHD system from either two-fluid or kinetic models in a mathematically rigorous way.
Chae et al. in \cite{Chae2014} established the global existence of weak solutions and the local well-posedness
of smooth solutions in Sobolev space $H^s(\mathbb{R}^3)$ with $s>\frac{5}{2}$. They also proved a Liouville theorem for the stationary solutions of the system \eqref{Hall-MHD}. In \cite{Ben2016}, the authors showed the local existence of strong solutions in $H^2(\mathbb{R}^3)$. Later on, Dai \cite{Dai2019} weakened the result in \cite{Ben2016}.
More precisely, they obtained the local well-posedness theory in $H^s(\mathbb{R}^n)$ with $s>\frac{n}{2}$ and $ n\geqslant 2$.
Very recently, Danchin et al. in \cite{Dan2020} showed global well-posedness when the initial conditions in critical spaces
$\dot{B}^{\frac{3}{p}-1}_{p,1}$, $ 1 \leqslant p<\infty$, and are small enough. More interesting results, we recommend
\cite{LY2020, Li2020, Liu2021, Wu2018}.

It is well-known that the global well-posedness to the 3D incompressible Hall-MHD equations with large initial data is still unsolved. As far as we know, with respect to this
matter, only some partial results are known. For example, the solutions satisfies some special structures where an important
case is axisymmetric. Fan et al. in \cite{Fan2013} investigated \eqref{Hall-MHD} is global well-posed without any smallness
assumptions for a class of special axisymmetric initial data. More precisely, the initial data satisfy swirl components of
the velocity field and magnetic vorticity field vanish, that is,
\begin{equation*}
     \begin{split}
u(0, x)=u_0^r(r, z)e_r + u_0^z(r, z)e_z \quad \mbox{and} \quad B(0, x)=B_0^\theta(r, z)e_\theta.
     \end{split}
\end{equation*}
And they also assumed that
\begin{equation*}\label{assume}
     \begin{split}
(u_0, B_0)\in H^2(\mathbb{R}^3), \quad \mbox{and} \quad \frac{B_0^\theta}{r}\in L^\infty(\mathbb{R}^3).
     \end{split}
\end{equation*}

Inspired by \cite{Fan2013}, the aim of this paper is to establish the global well-posedness of 3D Hall-MHD equations
\eqref{Hall-MHD} for a class of large axisymmetric data without swirl, which is stated as follows.

\begin{thm}\label{thm-main}
Let $u_0$ and $B_0$ are both axisymmetric divergence free vector fields such that $u_0^\theta=B_0^r=B_0^z=0$.
Assume that $(u_0, \,B_0)\in H^{1}{(\mathbb{R}^3)}\times H^2{(\mathbb{R}^3)}$,
and $ (\omega_{0}, \,\frac{\omega_0}{r}, \, \frac{B_{0}}{r})
\in L^{\sigma}(\mathbb{R}^3) \times L^2(\mathbb{R}^3) \times L^{\alpha}(\mathbb{R}^{3}) $ with $ 6< \alpha \leq \infty $ and $ 3 < \sigma \leqslant \alpha $.  Then the system \eqref{Hall-MHD} has a unique
global solution $(u, B)$ satisfying
\begin{align*}
&u\in L^\infty(0, T; H^1(\mathbb{R}^3))\cap L^1(0, T; H^3(\mathbb{R}^3))
  \cap L^1(0, T; W^{1, \infty}(\mathbb{R}^3)),\\
& B\in L^\infty(0, T; H^2(\mathbb{R}^3)) \cap L^{2}(0,\, T; \,H^{3}(\mathbb{R}^{3}))
  \cap L^1(0, T; W^{1, \infty}(\mathbb{R}^3)), \\
& \omega \in  L^\infty(0, T; L^\sigma(\mathbb{R}^3)) \cap
    L^{2}(0, T; H^1(\mathbb{R}^3)) \cap
   L^{1}(0, T; H^2(\mathbb{R}^3)), \\
&  \frac{\omega}{r} \in  L^\infty(0, T; L^2(\mathbb{R}^3)) \cap
    L^{2}(0, T; H^1(\mathbb{R}^3)).
\end{align*}
for any $0< T< \infty$.
%$H^s(\mathbb{R}^3)$ norm of
\end{thm}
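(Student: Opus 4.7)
The plan is to reduce the system to scalar axisymmetric equations and then cascade a priori estimates, whose initial data match the hypotheses on $(\omega_0,\omega_0/r,B_0/r)$. First, I would verify that the ansatz $u = u^r e_r + u^z e_z$, $B = b e_\theta$ is propagated by the flow: a direct computation with $B = b e_\theta$ yields $\curl(\curl B \times B) = -\tfrac{1}{r}\partial_z(b^2)\,e_\theta$, so $B$ stays purely swirl and the system collapses to the coupled scalar equations
\begin{align*}
(\partial_t + u\cdot\nabla)b - \tfrac{1}{r}\partial_z(b^2) &= \bigl(\Delta - \tfrac{1}{r^2}\bigr)b,\\
(\partial_t + u\cdot\nabla)\omega - \tfrac{u^r}{r}\omega &= \bigl(\Delta - \tfrac{1}{r^2}\bigr)\omega - \partial_z\bigl(\tfrac{b^2}{r}\bigr),
\end{align*}
with the scalar vorticity $\omega = \omega^\theta = \partial_z u^r - \partial_r u^z$. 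Setting $\beta := b/r$ and $\wt{\omega} := \omega/r$ yields the cleaner pair
\begin{align*}
(\partial_t + u\cdot\nabla)\beta + \tfrac{u^r}{r}\beta - \partial_z(\beta^2) &= \bigl(\Delta + \tfrac{2}{r}\partial_r\bigr)\beta,\\
(\partial_t + u\cdot\nabla)\wt{\omega} &= \bigl(\Delta + \tfrac{2}{r}\partial_r\bigr)\wt{\omega} - \partial_z(\beta^2),
\end{align*}
in which $\Delta + \tfrac{2}{r}\partial_r$ is the five-dimensional radial Laplacian that drives the smoothing of $\beta$ and $\wt\omega$ at every step.

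The body of the proof is a cascade of four a priori estimates. A basic $L^2$ energy identity closes because the Hall term is orthogonal to $B$: $\int\curl(\curl B\times B)\cdot B\,dx = \int(\curl B\times B)\cdot\curl B\,dx = 0$. Testing the $\beta$-equation with $|\beta|^{\alpha-2}\beta$, the transport is conservative and the Hall-type contribution vanishes via $\int|\beta|^\alpha\partial_z\beta\,dx = \tfrac{1}{\alpha+1}\int\partial_z(|\beta|^\alpha\beta)\,dx = 0$, producing
\[
\tfrac{d}{dt}\|\beta\|_{L^\alpha}^\alpha + c\,\bigl\|\nabla|\beta|^{\alpha/2}\bigr\|_{L^2}^2 \;\lesssim\; \|u^r/r\|_{L^\infty}\|\beta\|_{L^\alpha}^\alpha,
\]
and the dissipation upgrades via Sobolev to $\beta\in L^\alpha_T L^{3\alpha}_x$. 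Next, an $L^2$ estimate on $\wt\omega$ with source $-\partial_z(\beta^2)$ is closed by an integration by parts followed by Gagliardo--Nirenberg interpolation (the hypothesis $\alpha > 6$ renders the H\"older exponent $\alpha/(\alpha-2)$ compatible with $\wt\omega\in H^1\hookrightarrow L^6$). Finally, an $L^\sigma$ estimate on $\omega$ controls the stretching $(u^r/r)\omega$ via the axisymmetric Biot--Savart bound $\|u^r/r\|_{L^\infty}\lesssim \|\wt\omega\|_{L^{q_1}}^\theta\|\wt\omega\|_{L^{q_2}}^{1-\theta}$ with $q_2>3$ (hence the restriction $\sigma>3$), and treats the source $\partial_z(b^2/r)$ using $\beta\in L^\infty_T L^\alpha$ together with the energy control of $b$.

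The hard part is that these four estimates are tightly coupled: $u^r/r$ appears on the right of both the $\beta$- and $\omega$-estimates, and $\partial_z(\beta^2)$ ties $\beta$ to $\wt\omega$. To close them I would assemble the four differential inequalities, absorb all dissipative terms on the right via Young, and apply Gronwall; the thresholds $\alpha>6$ and $\sigma>3$ are chosen precisely so that every cross-term is time-integrable. Once these critical bounds are in hand, propagating $H^1$ regularity of $u$ and $H^2$ regularity of $B$ is comparatively standard---the $L^\alpha$- and $L^\sigma$-controls dominate the Hall nonlinearity---and a Beale--Kato--Majda-type argument upgrades to $\nabla u,\nabla B \in L^1_T L^\infty$. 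Uniqueness then follows from the usual $L^2$-difference estimate.
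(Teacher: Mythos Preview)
Your derivation of the $b$-equation drops the stretching term $B\cdot\nabla u$. With $B = b\,e_\theta$ and $u = u^r e_r + u^z e_z$ one has $B\cdot\nabla u = \tfrac{b}{r}\partial_\theta u = \tfrac{u^r b}{r}\,e_\theta$, so the correct scalar equation carries an extra $+\tfrac{u^r}{r}b$ on the right. When you then pass to $\beta = b/r$, this term cancels \emph{exactly} against the contribution produced by the change of variables, and the $\beta$-equation is in fact autonomous:
\[
(\partial_t + u\cdot\nabla)\beta \;=\; \bigl(\Delta + \tfrac{2}{r}\partial_r\bigr)\beta + \partial_z(\beta^2),
\]
with no $u^r/r$ anywhere. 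Hence $\|\beta(t)\|_{L^p}\le\|\beta_0\|_{L^p}$ for every $p\in[2,\alpha]$ without any Gronwall argument, and the ``tight coupling'' you describe is an artifact of the missing term: the true structure is a clean cascade $\beta\to\wt\omega\to u^r/r\to\omega$, not a coupled system.

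You also misplace both thresholds. Closing the $L^2$ bound on $\wt\omega$ only needs $\beta\in L^4$ (one integrates by parts and uses $\|\beta\|_{L^4}^2\|\nabla\wt\omega\|_{L^2}$), so $\alpha>6$ is irrelevant there; and $\|u^r/r\|_{L^\infty}\lesssim\|\wt\omega\|_{L^2}^{1/2}\|\nabla\wt\omega\|_{L^2}^{1/2}$ needs no $\sigma>3$. In the paper, $\sigma>3$ is used for the Sobolev embedding $W^{1,\sigma}\hookrightarrow L^\infty$ to get $u\in L^\infty_T L^\infty$, while $\alpha>6$ enters only when one needs $\nabla B\in L^2_T L^\infty$: this is obtained by parabolic $L^\gamma$ regularity for the $B$-equation with $\gamma=\tfrac{6\alpha}{6+\alpha}$, and $W^{1,\gamma}\hookrightarrow L^\infty$ forces $\gamma>3$, i.e.\ $\alpha>6$. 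That $L^2_T L^\infty$ control of $\nabla B$ is precisely what lets the $H^2$ estimate on $B$ absorb the Hall commutator $\|\nabla B\|_{L^\infty}\|\nabla^2 B\|_{L^2}\|\nabla^3 B\|_{L^2}$, and a Beale--Kato--Majda criterion does not deliver it. Similarly, the bound $\nabla u\in L^1_T L^\infty$ is not a BKM consequence but a Littlewood--Paley computation on the vorticity heat equation in $B^{3/p}_{p,1}$; this is a substantive step that your outline skips.
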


\begin{rmk}
Using the estimate $u\in L^1(0, T; W^{1, \infty}(\mathbb{R}^3))$, which is given by Theorem \ref{thm-main}, we can propagate by classical arguments higher order regularity, for example higher $H^s$ Sobolev regularity.
\end{rmk}

\begin{rmk}
It should be stressed that we improve the result of \cite{Fan2013} in two ways.
On the one hand, since $B(0, x)=B_0^\theta(r, z)e_\theta$, we extend the condition $\frac{B_0^\theta}{r}\in L^\infty(\mathbb{R}^3)$ to $\frac{B_0^\theta}{r}\in L^\alpha(\mathbb{R}^3)$, $6< \alpha\leqslant \infty$.
On the other hand, due to in cylindrical coordinates the vorticity of the swirl-free axisymmetric velocity
is given by
$\omega= \operatorname{curl}u = \omega^\theta e_\theta$
with $\omega^\theta :=\partial_z u^r-\partial_r u^z$, and
$|\nabla^2 u| \sim |\nabla \omega^\theta| + |\frac{\omega^\theta}{r}|.$ Moreover, Applying the Sobolev embedding inequality, we know $\|\nabla u\|_{L^p(\mathbb{R}^3)}\leqslant C\|\nabla u\|_{H^1(\mathbb{R}^3)}$, $2\leqslant p\leqslant 6$.
Thus, our result weaken the condition $u_0\in H^2(\mathbb{R}^3)$.
%Thus, it is not hard to find that our result can be regarded as a relaxation to the result of Lei in \cite{Lei2015}.\\
%with respect to the assumptions of the initial data velocity and magnetic field.\\
\end{rmk}

\begin{rmk}\label{rmk1-3}
Due to the presence of the Hall term $\operatorname{curl}(\operatorname{curl}B \times B)$, we need the
estimate $\nabla B \in L^{2}(0, T; L^{\infty}(\mathbb{R}^{3}))$ to obtain the $L^{\infty}(0, T; H^{2}(\mathbb{R}^3))$ estimate of $B$, for more details $J_4$ in step 8 below. Thus we add the assumption condition
$(\omega_{0}, \, \frac{B_{0}}{r})\in L^{\sigma}(\mathbb{R}^3) \times L^{\alpha}(\mathbb{R}^{3}) $ with $6< \alpha\leq \infty $ and $ 3 < \sigma \leqslant \alpha $. In the short future we will further study whether or not the condition can be removed.
\end{rmk}

\begin{rmk}
When the Hall term $\operatorname{curl}(\operatorname{curl}B \times B)$ is neglected, the system \eqref{Hall-MHD} reduces to the
incompressible viscous MHD system. For this case,  combining remark \ref{rmk1-3}, we do not need the assumption condition $(\omega_{0}, \, \frac{B_{0}}{r})\in L^{\sigma}(\mathbb{R}^3) \times L^{\alpha}(\mathbb{R}^{3}) $ with $ 6< \alpha\leqslant \infty $ and $ 3 < \sigma \leqslant \alpha $. This is because we emphasize that for the incompressible viscous MHD system, in order to get the estimate $B\in L^{\infty}(0, T; H^{2}(\mathbb{R}^3))$, we only need the $L^{2}(0, T; L^6(\mathbb{R}^{3}))$ estimate of $\nabla B$, which can be similarly estimated as \eqref{H2Est-B} below. This is a new result for incompressible viscous MHD system.
\end{rmk}

The proof of the main result is achieved by using more deeply the structure of the Hall-MHD equations in axisymmtric case with zero swirl component of the velocity and the magnetic voricity. 
In contrast with the proof in \cite{Fan2013}, since the absence of the condition $u_0\in H^2(\mathbb{R}^3)$, we use the dyadic decomposition to obtain the $u\in L^1([0, T]; \operatorname{Lip} (\mathbb{R}^3))$, see step 5 below. We also can not directly obtain $\nabla B\in L^2([0, T]; L^\infty (\mathbb{R}^3))$, which plays the key role in the proof. For that we assume $(\omega_{0}, \, \frac{B_{0}}{r})\in L^{\sigma}(\mathbb{R}^3) \times L^{\alpha}(\mathbb{R}^{3}) $ with $ 6 < \alpha \leqslant \infty $ and $ 3 < \sigma \leqslant \alpha $. It should be emphasized that we sufficiently use the embedding theorem $W^{1, p}(\mathbb{R}^3)\hookrightarrow C_0(\mathbb{R}^3) $ $ (p > 3) $ 
(see Theorem 6.7 of \cite{LT2007}) for appropriately choose the value range of $\sigma$ and $\alpha$, for more details see Step 4 and Step 6 below. With $u\in L^1([0, T]; \operatorname{Lip} (\mathbb{R}^3))$ and $\nabla B\in L^2([0, T]; L^\infty (\mathbb{R}^3))$ in hand, we show the $H^2$ estimate of $B$ in Step 8.

\medbreak \noindent{\bf Notations:} We shall denote
$\int \cdot \mathrm{d}x \triangleq \int_{\mathbb{R}^3} \cdot \mathrm{d}x $,
$ \| \cdot \|_{L^{p}} \triangleq \| \cdot \|_{L^{p}(\mathbb{R}^{3})} $, and $ \| (A, B, C) \|_{X}^{r} = \| A \|_{X}^{r}
+ \| B \|_{X}^{r} + \| C \|_{X}^{r} $, where X is a Banach space. We use the letter $C$ to denote a generic constant, which may vary from line to line. We always use $A\lesssim B$ to denote $A \leqslant CB$ and omit a generic positive constant $C$ in $\exp C t^\alpha$.

%%%%%%%%%%%%%%%%%%%%%%%%%%%%%%%%%%%%%%%%%%%%%%%%%%%%%%%%%%%

\renewcommand{\theequation}{\thesection.\arabic{equation}}
\setcounter{equation}{0} %%%%%%%%%%%%%%%%%%%%%%%%%%

%%%%%%%%%%%%%%%%%%%%%%%%%%%%%%%%%%%%%%%%%%%%%%%%%%%%%%%%%%%
\section{Preliminaries}
In this section, we will transform the system \eqref{Hall-MHD} into the cylindrical coordinate, recall some useful inequalities and introduce Besov spaces.

%The purpose of this work is to establish the global well-posedness of the axisymmetric solutions to the Hall-MHD equations \eqref{Hall-MHD}.
Firstly, we derive the system \eqref{Hall-MHD} in the cylindrical coordinate.
In the cylindrical coordinate, the solutions to \eqref{Hall-MHD} have the following form
\begin{equation*}
\begin{cases}
\, u(t, x)=u^r(r, z, t) e^r + u^\theta(r, z, t) e^\theta
      + u^z(r, z, t) e^z,\\
\, B(t, x)=B^r(r, z, t) e^r + B^\theta(r, z, t) e^\theta
      + B^z(r, z, t) e^z,\\
\, P(t,x)= P(r, z, t).
\end{cases}
\end{equation*}
where
\[
   e^{r} = \left(\frac{x_{1}}{r}, \,\frac{x_{2}}{r}, \,0\right)^{T},  \
   e^{\theta} = \left(-\frac{x_{2}}{r}, \,\frac{x_{1}}{r}, \,0\right)^{T}, \
   e^{z} = (0, \,0, \,1)^{T}, \
    r = \sqrt{x_{1}^{2} + x_{2}^{2}}.
\]

Taking advantage of the local well-posedness result for the
system \eqref{Hall-MHD} in $\mathbb{R}^3$, see \cite{Dai2019},
we can obtain the following lemma.
\begin{lem}\label{lem-0}
Let $(u_0, B_0) \in H^{1}{(\mathbb{R}^3)} \times H^2{(\mathbb{R}^3)}$ be axisymmetric divergence-free vector.
Then there exists $T>0$ and a unique axisymmetric
solution $(u, \,B)$ on $[0, \,T)$ to the
system \eqref{Hall-MHD} such that
\begin{equation*}
 u \in L^\infty(0,\, T; \;H^1(\mathbb{R}^3)),\qquad
 B \in L^\infty(0,\, T; \;H^2(\mathbb{R}^3)).
\end{equation*}
\end{lem}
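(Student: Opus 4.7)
The approach is an approximation argument that upgrades the local well-posedness of \cite{Dai2019}. The plan is to regularize the data, obtain smooth axisymmetric approximate solutions on $[0,T_\epsilon)$, derive uniform $H^1\times H^2$ bounds on a common time interval $[0,T]$, and pass to the limit; uniqueness is then handled by a difference estimate.

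\textbf{Regularization.} Let $\rho_\epsilon$ be a radial mollifier (so axisymmetry is preserved) and set $u_0^\epsilon = \rho_\epsilon \ast u_0$, $B_0^\epsilon = \rho_\epsilon \ast B_0$. Each approximation is smooth, axisymmetric, divergence-free, bounded in the $H^1$ (resp.\ $H^2$) norm of the original datum, and converges strongly in the corresponding space. Since $(u_0^\epsilon, B_0^\epsilon)\in H^s(\mathbb{R}^3)$ for every $s>3/2$, \cite{Dai2019} yields a unique local smooth solution $(u^\epsilon, B^\epsilon)$ on some $[0, T_\epsilon)$. Axisymmetry of this solution follows because \eqref{Hall-MHD} commutes with rotations about the $z$-axis, combined with uniqueness.

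\textbf{Uniform estimates.} The $L^2$ energy identity
\begin{equation*}
\tfrac{1}{2}\tfrac{d}{dt}\bigl(\|u\|_{L^2}^2 + \|B\|_{L^2}^2\bigr) + \|\nabla u\|_{L^2}^2 + \|\nabla B\|_{L^2}^2 = 0
\end{equation*}
holds because $(\operatorname{curl} B \times B)\cdot \operatorname{curl} B \equiv 0$ pointwise. I then propagate $\dot H^1$-regularity of $u$ by testing the velocity equation against $-\Delta u$, and $\dot H^2$-regularity of $B$ by testing the magnetic equation against $\Delta^2 B$. The resulting nonlinear terms are controlled by H\"older's inequality combined with the Sobolev embeddings $H^2\hookrightarrow W^{1,4}\cap L^\infty$ and $H^1\hookrightarrow L^6$, plus Gagliardo-Nirenberg interpolation; the highest-order derivatives are absorbed into the dissipation $\|\nabla^2 u\|_{L^2}^2 + \|\nabla^3 B\|_{L^2}^2$ via Young's inequality. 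The outcome is a closed differential inequality
\begin{equation*}
\tfrac{d}{dt}\bigl(\|u\|_{H^1}^2 + \|B\|_{H^2}^2\bigr) + \|\nabla u\|_{H^1}^2 + \|\nabla B\|_{H^2}^2 \leq C\bigl(1 + \|u\|_{H^1}^2 + \|B\|_{H^2}^2\bigr)^N,
\end{equation*}
which by Gr\"onwall furnishes a lifespan $T = T(\|u_0\|_{H^1}, \|B_0\|_{H^2}) > 0$ independent of $\epsilon$, together with a uniform bound for $(u^\epsilon, B^\epsilon)$ in $L^\infty(0,T; H^1\times H^2)\cap L^2(0,T; H^2\times H^3)$.

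\textbf{Limit, uniqueness, and main obstacle.} Since $(\partial_t u^\epsilon, \partial_t B^\epsilon)$ are uniformly bounded in a negative Sobolev space (read off from the equations), the Aubin-Lions lemma gives strong subsequential convergence in $L^2(0,T; H^1_{\mathrm{loc}}\times H^2_{\mathrm{loc}})$, which is sufficient to pass to the limit in every nonlinearity, including the Hall term; axisymmetry of the limit is preserved distributionally. Uniqueness is obtained by writing the system for the difference of two such solutions and running a standard $L^2$-energy estimate, using $\nabla B\in L^2_tL^\infty_x$ (from $B\in L^2_tH^3_x$ via Sobolev) to close with Gr\"onwall. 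The main technical obstacle is the $H^2$-estimate for $B$ in the presence of the Hall term: testing $\operatorname{curl}(\operatorname{curl} B\times B)$ against $\Delta^2 B$ and integrating by parts produces cubic integrals carrying up to three derivatives on $B$, which must be carefully split into pieces like $\int|\nabla B|\,|\nabla^2 B|\,|\nabla^3 B|\,dx$, interpolated via Gagliardo-Nirenberg, and absorbed into $\|\nabla^3 B\|_{L^2}^2$ on the left-hand side; this is where the gain over a naive energy estimate really pays off.
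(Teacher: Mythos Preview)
The paper itself offers no proof of this lemma beyond citing \cite{Dai2019}; note that Dai's result is stated for $(u_0,B_0)\in H^s\times H^s$ with $s>\tfrac32$, so the case $u_0\in H^1$ is not literally covered there, and the paper glosses over this point. Your approximation--compactness scheme is therefore doing more work than the paper does, and your treatment of the Hall term, the limit passage, and the uniqueness step are all sound.

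The genuine gap is your claimed closed inequality $\tfrac{d}{dt}X+D\le C(1+X)^N$ with $X=\|u\|_{H^1}^2+\|B\|_{H^2}^2$ and $D=\|\nabla^2u\|_{L^2}^2+\|\nabla^3B\|_{L^2}^2$. Contrary to your diagnosis, the Hall contribution is not the obstacle: after the commutator cancellation it yields $\|\nabla B\|_{L^\infty}\|\nabla^2 B\|_{L^2}\|\nabla^3 B\|_{L^2}\lesssim X^{3/4}D^{3/4}$, which is subcritical and absorbs. The term that actually fails is the stretching term $B\cdot\nabla u$ in the magnetic equation. Testing it against $\Delta^2 B$ and integrating by parts once (using $\operatorname{div}B=0$) leaves the piece
\[
\Bigl|\int(B\cdot\nabla^2 u):\nabla\Delta B\,\mathrm{d}x\Bigr|
\le\|B\|_{L^\infty}\|\nabla^2u\|_{L^2}\|\nabla^3B\|_{L^2}
\lesssim X^{1/2}\,D_u^{1/2}D_B^{1/2},
\]
and since the dissipation exponents already sum to $1$, no H\"older/Young splitting can convert this into a pure power of $X$; you only obtain $X'+D\le P(X)+CX^{1/2}D$, from which a uniform lifespan for large data does not follow by the ODE comparison you invoke. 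A correct repair is to decouple the two estimates: the $\dot H^1$ estimate for $u$ alone \emph{is} polynomial (the forcing $B\cdot\nabla B$ costs only powers of $\|B\|_{H^2}$), so on a bootstrap interval where $X\le M$ it gives $\int_0^t\|\nabla^2u\|_{L^2}^2\le \|u_0\|_{H^1}^2+CtP(M)$; feeding this into the $\dot H^2$ estimate for $B$ as an $L^1_t$ Gr\"onwall coefficient then closes the bootstrap, with $M$ of order $\|u_0\|_{H^1}^2+e^{C\|u_0\|_{H^1}^2}\|B_0\|_{H^2}^2$ and a lifespan depending only on this quantity.
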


As in \cite{Fan2013}, we also assume  $ u_0^\theta = B_0^r = B_0^z=0 $ and seek the axisymmetric solutions to the system \eqref{Hall-MHD}.
It is not difficult to deduce from the uniqueness of local solutions that
$ u_0^\theta=B_0^r=B_0^z=0 $ implies $ u^\theta = B^r = B^z = 0 $ for all later times.
In this case, by directly computations, we have
\begin{equation}   \label{doubleCurl}
    \operatorname{curl}(\operatorname{curl}B \times B)
  = -\frac{2}{r} B^{\theta} \partial_{z}B^{\theta}  e^{\theta},  \qquad
    \omega =  \operatorname{curl}u
           = (\partial_{z}u^{r} - \partial_{r}u^{z}) e^{\theta}.
\end{equation}
For convenience, we denote $  \omega^{\theta} = \partial_{z}u^{r} - \partial_{r}u^{z} $, i.e.,
$ \operatorname{curl}u = \omega^{\theta} e^{\theta} $.

Hence the system \eqref{Hall-MHD} can be rewritten as
\begin{equation}\label{MHD-2}
    \begin{cases}
   \partial_t u^r + u^r\partial_r u^r + u^z \partial_z u^r + \partial_r P
  = (\Delta-\frac{1}{r^2})u^r -\frac{(B^\theta)^2}{r},\\
   \partial_t u^z + u^r \partial_r u^z + u^z \partial_z u^z + \partial_z P = \Delta u^z, \\
   \partial_t B^\theta + u^r\partial_r B^\theta + u^z\partial_z B^\theta
  = (\Delta - \frac{1}{r^{2}}) B^{\theta} + \frac{u^r }{r}B^{\theta}
    + \frac{1}{r}\partial_{z}(B^\theta)^{2},\\
   \partial_r u^r+\frac{u^r}{r}+\partial_z u^z =0,\\
  u^{r}\big|_{t = 0} = u^{r}_{0}, \; u^{z}\big|_{t = 0} = u^{z}_{0}, \;
  B^{\theta}\big|_{t = 0} = B^{\theta}_{0}.
    \end{cases}
\end{equation}

Define
\begin{equation}\label{1}
     \begin{split}
\Pi =\frac{B^\theta}{r} \quad \mbox{and} \quad \Omega=\frac{\omega^\theta}{r},
     \end{split}
\end{equation}
then the system \eqref{MHD-2} is equivalent to
\begin{equation}\label{Hall-MHD-3}
    \begin{cases}
    \partial_t \Pi + u \cdot \nabla \Pi
  = (\Delta + \frac{2}{r}\partial_{r} ) \Pi + 2 \Pi \partial_{z} \Pi,\\
    \partial_t \Omega + u\cdot \nabla \Omega
    = (\Delta + \frac{2}{r}\partial_r)\Omega -\partial_z \Pi^2.
    \end{cases}
\end{equation}

Secondly, we present two known lemma, which will play an important role in our proof.
\begin{lem}[\cite{Abidi2011, Lei2015}] \label{lem-BS}
Let $ u $ be a smooth  axisymmetric vector field with zero divergence.
Then
\begin{equation*}
     \begin{split}
   \|u\|_{L^\infty}\lesssim \| \omega \|_{L^2}^{\frac{1}{2}}\| \nabla\omega \|_{L^2}^{\frac{1}{2}},
   \qquad
   \|\frac{u^r}{r}\|_{L^\infty}\lesssim \|\Omega\|_{L^2}^{\frac{1}{2}}\|\nabla\Omega\|_{L^2}^{\frac{1}{2}}.
     \end{split}
\end{equation*}
\end{lem}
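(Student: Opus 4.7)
The plan is to prove the two Biot--Savart type bounds as scale-critical Gagliardo--Nirenberg interpolations, using the axisymmetric no-swirl structure essentially, since the naive endpoint $\|f\|_{L^\infty(\R^3)}\lesssim\|f\|_{\dot H^1}^{1/2}\|f\|_{\dot H^2}^{1/2}$ fails for generic $f$.

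For the first inequality, I would start from $-\Delta u=\operatorname{curl}\omega$ (consequence of $\operatorname{div} u=0$) and decompose $u=\sum_{j\in\Z}\dot\Delta_j u$ in homogeneous dyadic blocks. Bernstein together with the one-derivative elliptic gain yields $\|\dot\Delta_j u\|_{L^\infty}\lesssim 2^{3j/2}\|\dot\Delta_j u\|_{L^2}\lesssim 2^{j/2}\|\dot\Delta_j\omega\|_{L^2}$, so that $\|u\|_{L^\infty}\lesssim\sum_j 2^{j/2}\|\dot\Delta_j\omega\|_{L^2}$. Splitting this sum at a level $N$ and applying Cauchy--Schwarz bounds the two halves respectively by $2^{N/2}\|\omega\|_{L^2}$ and $2^{-N/2}\|\nabla\omega\|_{L^2}$; the optimal choice $2^N\simeq\|\nabla\omega\|_{L^2}/\|\omega\|_{L^2}$ then delivers the claim. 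The no-swirl hypothesis is what makes this endpoint summation legitimate: because $\omega=\omega^\theta e^\theta$ has a single scalar component depending only on $(r,z)$, the low-frequency portion of the Biot--Savart kernel is tamed by the meridian-plane reduction, which otherwise would obstruct the endpoint sum.

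For the second inequality, the observation is that $(u^r/r,\Omega)$ plays the role of $(u,\omega)$ in a higher-dimensional geometry. Introducing the Stokes stream function $\psi$ with $u^r=-r^{-1}\partial_z\psi$, $u^z=r^{-1}\partial_r\psi$, and setting $\phi=\psi/r^2$, a direct computation in cylindrical coordinates gives $u^r/r=-\partial_z\phi$ together with $-(\Delta+2r^{-1}\partial_r)\phi=\Omega$, the operator on the left being the five-dimensional Laplacian acting on functions depending only on $(r,z)$. Lifting $\phi$ and $\Omega$ to axisymmetric functions on $\R^5$, the same one-derivative elliptic gain is available and the dyadic/Bernstein argument of the first step can be repeated, taking care of the weighted translation between $L^2(\R^3)$ and $L^2(\R^5)$ for axisymmetric integrands, to produce the claimed interpolation in the stated $\R^3$ norms.

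The main obstacle is the scale-critical character of both estimates: at this endpoint the dyadic sum $\sum_j 2^{j/2}\|\dot\Delta_j\omega\|_{L^2}$ does not converge for a generic divergence-free field, and one must exploit the axisymmetric no-swirl structure fully (one-component vorticity, meridian half-plane reduction) to rescue the summation. An alternative route, followed in \cite{Abidi2011,Lei2015}, is to work directly with explicit half-plane Biot--Savart kernels rather than through Littlewood--Paley; either approach should complete the proof.
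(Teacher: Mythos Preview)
The paper does not prove this lemma at all: it simply cites \cite{Abidi2011,Lei2015} and writes ``we omit the details here.'' So there is no in-paper argument to compare against, only the cited literature.

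That said, your proposal contains a real misconception. You assert that the endpoint interpolation $\|f\|_{L^\infty(\R^3)}\lesssim\|f\|_{\dot H^1}^{1/2}\|f\|_{\dot H^2}^{1/2}$ fails for generic $f$ and that the axisymmetric no-swirl structure is needed to ``rescue the summation.'' This is false: the inequality is a standard Gagliardo--Nirenberg estimate valid for every $f\in\dot H^1\cap\dot H^2(\R^3)$, and in fact your own splitting argument proves it with no symmetry hypothesis whatsoever. Indeed, splitting $\sum_j 2^{j/2}\|\dot\Delta_j\omega\|_{L^2}$ at level $N$ and applying Cauchy--Schwarz on each piece gives exactly $2^{N/2}\|\omega\|_{L^2}+2^{-N/2}\|\nabla\omega\|_{L^2}$, and optimizing in $N$ closes the estimate. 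Nothing about a ``single scalar component'' or ``meridian-plane reduction'' enters. So your proof of the first inequality is correct, but the accompanying narrative about why it works is wrong; the first bound holds for \emph{any} divergence-free $u$ decaying at infinity.

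For the second inequality the five-dimensional lift is indeed a natural idea (the operator $\Delta+\tfrac{2}{r}\partial_r$ is the radial part of $\Delta_{\R^5}$), but your sketch glosses over a genuine obstacle: axisymmetric $L^2$ norms in $\R^3$ carry the weight $r\,dr\,dz$ while in $\R^5$ they carry $r^3\,dr\,dz$, so $\|\Omega\|_{L^2(\R^3)}$, $\|\nabla\Omega\|_{L^2(\R^3)}$ are not the $\R^5$ Sobolev norms of the lifted function, and the Bernstein/interpolation argument cannot be ``repeated'' verbatim. The references \cite{Abidi2011,Lei2015} handle $\|u^r/r\|_{L^\infty}$ by writing out and estimating the axisymmetric Biot--Savart kernel directly in the half-plane, which avoids this weight mismatch; if you want to push the $\R^5$ route through you would need to explain precisely how the weighted norms are reconciled.
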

This lemma was proved in many literatures, such as \cite{Abidi2011} and \cite{Lei2015}, we omit the details here.

\begin{lem}[Lemma 4.1 of \cite{Liu2018}]   \label{Lem:grad-curl}
Let $ u $ be divergence-free and $ 1 < p < \infty $. Then
there is a constant $ C > 0 $ depending only on the dimension $ n $ such that
\begin{equation*}
    \| \nabla u \|_{L^{p}} \leqslant \frac{C p^{2}}{p - 1}
    \| \operatorname{curl}u \|_{L^{p}}.
\end{equation*}
\end{lem}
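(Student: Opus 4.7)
The plan is to realize $\nabla u$ as a matrix of zeroth-order Fourier multipliers applied to $\operatorname{curl}u$ via the Biot--Savart law, and then invoke the quantitative Calderón--Zygmund $L^p$-boundedness theorem with its explicit blow-up in $p$.

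First, I would use the vector identity $-\Delta u = \operatorname{curl}(\operatorname{curl}u) - \nabla(\operatorname{div}u) = \operatorname{curl}\omega$, valid since $\operatorname{div}u = 0$, where $\omega = \operatorname{curl}u$. Inverting the Laplacian (e.g.\ on Schwartz functions and extending by density) yields $u = (-\Delta)^{-1}\operatorname{curl}\omega$ and hence
\begin{equation*}
\partial_l u_i = \partial_l (-\Delta)^{-1} (\operatorname{curl}\omega)_i.
\end{equation*}
On the Fourier side, each entry of $\nabla u$ is a finite linear combination, with universal constant coefficients, of terms of the form $R_j R_k \omega_m$, where $R_j = \partial_j(-\Delta)^{-1/2}$ is the $j$-th Riesz transform. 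Equivalently, the operator $T:\omega \mapsto \nabla u$ is given by a matrix-valued multiplier whose entries are the zeroth-order homogeneous symbols $\xi_j\xi_k/|\xi|^2$; in particular $T$ is a Calderón--Zygmund operator on $\mathbb{R}^n$, bounded on $L^2$ by $1$ and with a standard Schwartz-type kernel.

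Next, I would apply a quantitative version of the Calderón--Zygmund theorem (see, e.g., Stein's \emph{Singular Integrals and Differentiability Properties of Functions}, or Grafakos's \emph{Classical Fourier Analysis}): any CZ operator of this type is bounded on $L^p(\mathbb{R}^n)$, $1<p<\infty$, with
\begin{equation*}
\|T\|_{L^p\to L^p} \leqslant C_n \max(p, p'), \qquad p' = \frac{p}{p-1}.
\end{equation*}
The dependence on $p$ arises by interpolating the weak-type $(1,1)$ bound with the $L^2$ bound via Marcinkiewicz to cover $1<p\leqslant 2$, and then using duality to cover $2\leqslant p <\infty$. Applied to $T$ this gives $\|\nabla u\|_{L^p} \leqslant C_n \max(p,p')\|\operatorname{curl}u\|_{L^p}$.

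To conclude, I would verify the elementary inequality $\max(p,p')\leqslant p^2/(p-1)$ for $1<p<\infty$: for $p\geqslant 2$ this reduces to $p-1\leqslant p$, and for $1<p\leqslant 2$ it reduces to $1\leqslant p$. This yields the stated bound $\|\nabla u\|_{L^p} \leqslant C_n p^2(p-1)^{-1}\|\operatorname{curl}u\|_{L^p}$. The main obstacle is the quantitative $p$-dependence: qualitative $L^p$-boundedness of $T$ is transparent from the flatness of the symbol, but the specific blow-up rates as $p\to 1$ and $p\to\infty$ require one to track constants through the Marcinkiewicz interpolation and duality steps (or, alternatively, to use the classical sharp bounds for iterated Riesz transforms).
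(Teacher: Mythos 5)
Your argument is correct, and it is the standard proof of this inequality. The paper itself offers no proof at all: it simply quotes the statement as Lemma~4.1 of the cited reference \cite{Liu2018}, so there is nothing in the source to compare against step by step. Your route --- writing $-\Delta u=\operatorname{curl}\omega$ for divergence-free $u$, inverting the Laplacian so that each entry of $\nabla u$ becomes a constant-coefficient combination of iterated Riesz transforms $R_jR_k\omega_m$ with symbol $\xi_j\xi_k/|\xi|^2$, and then invoking the quantitative Calder\'on--Zygmund bound $\|T\|_{L^p\to L^p}\leqslant C_n\max(p,p')$ --- is exactly the argument one expects behind the cited lemma, and your elementary verification that $\max(p,p')\leqslant p^2/(p-1)$ closes the gap to the stated constant. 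The only point worth being careful about, which you already flag, is that the $\max(p,p')$ growth is not automatic from qualitative $L^p$-boundedness: you must track the constant through Marcinkiewicz interpolation of the weak $(1,1)$ and $L^2$ bounds on $1<p\leqslant 2$ (which yields a constant of order $(p-1)^{-1}$ there) and then dualize for $p\geqslant 2$; citing the sharp bounds for iterated Riesz transforms is an acceptable shortcut. No gap.
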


Thirdly, we review the definition of Besov spaces and some useful inequalities. Let us first recall the classical dyadic decomposition in $ \mathbb{R}^3 $, see \cite{Che1998}.

For every $ u \in \mathcal{S}'(\mathbb{R}^3) $, the class of tempered distributions,
and $ q \in \mathbb{N} $
\begin{equation*}
    \begin{split}
   \Delta_q u:= \varphi (2^{-q}\mathcal{D})u, \quad \Delta_{-1} u:=\chi(\mathcal{D})u
   \quad \mbox{and} \quad S_q u:= \sum_{-1\leqslant j \leqslant q-1} \Delta_j u.
    \end{split}
\end{equation*}
where $ \varphi $ and $ \chi $ are two smooth functions with compact support satisfying
\begin{equation}   \label{Defannulus}
\begin{split}
      &\operatorname{supp} \varphi = \mathcal{C}
 := \left\{ \xi \in \mathbb{R}^3 \,\mid\, \frac{3}{4}\leqslant |\xi|\leqslant \frac{8}{3} \right\},  \\
     &\operatorname{supp} \chi = \mathcal{B}
 :=\left\{ \xi \in \mathbb{R}^3 \,\mid\, |\xi| \leqslant \frac{3}{4} \right\},
\end{split}
\end{equation}
\begin{equation*}
   \sum_{j\in \mathbb{Z}}\varphi (2^{-j}\xi) = 1
   \quad \forall\, \xi \in \mathbb{R}^{3} \!\setminus\! \{ 0 \},
\quad \textrm{and}  \quad
  \chi(\xi)+\sum_{q\geqslant 0}\varphi (2^{-q}\xi)= 1   \quad \forall\,\xi\in \mathbb{R}^3.
\end{equation*}
Then for every tempered distribution $ u $, the following decomposition holds
\begin{equation*}
    \begin{split}
   u=\sum_{q \geqslant -1}\Delta_q u,\quad \forall \, u \in \mathcal{S}'(\mathbb{R}^3).
    \end{split}
\end{equation*}

Now we are in a position to give a definition to the
Besov space $ B_{p, r}^{s}(\mathbb{R}^{3}) $, abbreviated as $ B_{p, r}^{s} $ without confusion.

\begin{defi}[\cite{Che1998}]
For $ 1 \leqslant p , \,r \leqslant \infty $ and $ s \in \mathbb{R}$,
the Besov space $ B_{p, r}^s $ is the space of tempered distribution $ u $ such that
\begin{equation*}
   \|u\|_{B^s_{p, r}}:= \left(2^{qs} \|\Delta_q u\|_{L^p}\right)_{\ell^r}< +\infty.
\end{equation*}
\end{defi}

Finally, we recall the Bernstein inequalities, %see \cite{Che1998},
tame estimates, and commutator estimates.
\begin{lem}[Lemma 2.1 of \cite{Che1998}]\label{lem-Bern}
Let $\mathcal{B}$ be a ball and $\mathcal{C}$ an annulus in $ \mathbb{R}^3 $,
defined in \eqref{Defannulus}. There exists a constant $C$ such
that for any positive $ \delta $, non-negative integer $k$,
smooth homogeneous function $ \sigma $ of degree
$ m $, real numbers $ q \geqslant p \geqslant 1 $, and $ u \in L^{p}(\mathbb{R}^{3}) $,
we have
\begin{equation*}
    \begin{split}
   &\operatorname{supp} \hat{u}\subset \delta \mathcal{B} \Rightarrow \sup_{|\alpha|=k}\|\partial^\alpha u\|_{L^q}
   \leqslant C^{k+1} \delta^{k+3(\frac{1}{a}-\frac{1}{b})} \|u\|_{L^p}, \\
   &\operatorname{supp} \hat{u}\subset \delta \mathcal{C} \Rightarrow C^{-1-k} \delta^{k}\|u\|_{L^p}
   \leqslant \sup_{|\alpha|=k} \|\partial^\alpha u\|_{L^p}\leqslant C^{1+k} \delta^{k}\|u\|_{L^p}, \\
   &\operatorname{supp} \hat{u}\subset \delta \mathcal{C} \Rightarrow \
   \|\sigma(\mathcal{D}) u\|_{L^q}\leqslant C_{\sigma, m}\delta^{m+3(\frac{1}{a}-\frac{1}{b})}\|u\|_{L^p},
    \end{split}
\end{equation*}
where $ \hat{u} $ denotes the Fourier transform of $ u $.
\end{lem}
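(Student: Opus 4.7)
The plan is to realize each estimate as a convolution inequality in physical space: insert a smooth frequency cutoff that equals $1$ on the spectrum of $u$, Fourier-invert to produce a convolution against a Schwartz kernel, and apply Young's inequality. A rescaling that puts the relevant frequencies at unit size produces all the $\delta$-factors transparently, and the upper-ball and homogeneous-multiplier bounds follow directly from this scheme with different choices of symbol; only the lower annulus bound requires an additional inversion step.

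For the ball estimate, I would fix $\phi\in C_c^\infty(\mathbb{R}^3)$ with $\phi\equiv 1$ on $\mathcal{B}$. Since $\operatorname{supp}\widehat u\subset\delta\mathcal{B}$, the identity
$\widehat{\partial^\alpha u}(\xi)=(i\xi)^\alpha\phi(\xi/\delta)\widehat u(\xi)=\delta^{|\alpha|}g_\alpha(\xi/\delta)\widehat u(\xi)$
with $g_\alpha(\eta):=(i\eta)^\alpha\phi(\eta)\in\mathcal{S}(\mathbb{R}^3)$ inverts, via the Fourier dilation rule, to
\[
\partial^\alpha u=\delta^{|\alpha|}\bigl(\delta^{3}\check g_\alpha(\delta\,\cdot\,)\bigr)\ast u.
\]
Choosing $s\geqslant 1$ with $1+\tfrac1q=\tfrac1s+\tfrac1p$ and applying Young's inequality yields $\|\partial^\alpha u\|_{L^q}\leqslant\delta^{|\alpha|+3(\tfrac1p-\tfrac1q)}\|\check g_\alpha\|_{L^s}\|u\|_{L^p}$; this norm is finite because $g_\alpha\in\mathcal{S}$, and a Leibniz count as $k=|\alpha|$ grows produces the $C^{k+1}$ constant. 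The multiplier case is analogous: with $\tilde\phi\in C_c^\infty$ equal to $1$ on $\mathcal{C}$ and supported in a slight enlargement, $h:=\sigma\tilde\phi\in C_c^\infty(\mathbb{R}^3)$ (the singularity of $\sigma$ at the origin is killed by $\tilde\phi$), homogeneity gives $\sigma(\xi)\tilde\phi(\xi/\delta)=\delta^m h(\xi/\delta)$, and the same convolution-plus-Young argument produces the $\delta^{m+3(1/p-1/q)}$ estimate. The upper annulus bound is a special case with $p=q$.

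The main obstacle is the annulus lower bound. Here I would use the multinomial identity $|\xi|^{2k}=\sum_{|\alpha|=k}c_\alpha\xi^{2\alpha}$ with $c_\alpha>0$, which is bounded below on $\operatorname{supp}\tilde\phi$. This allows one to construct smooth, compactly supported symbols $H_\alpha$ satisfying $\sum_{|\alpha|=k}H_\alpha(\xi)(i\xi)^\alpha=\tilde\phi(\xi)$, whence rescaling gives the representation
\[
u=\delta^{-k}\sum_{|\alpha|=k}\bigl(\delta^{3}\check H_\alpha(\delta\,\cdot\,)\bigr)\ast\partial^\alpha u,
\]
and Young's inequality with convolution in $L^1$ produces $\|u\|_{L^p}\leqslant C^{k+1}\delta^{-k}\sup_{|\alpha|=k}\|\partial^\alpha u\|_{L^p}$, which rearranges to the claimed lower bound. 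The delicate point is keeping $\|\check H_\alpha\|_{L^1}$ geometric in $k$: one has to argue that the Fourier inverse of the explicit symbol $\tilde\phi(\xi)\,\xi^\alpha/|\xi|^{2k}$ has $L^1$-norm growing no faster than $C^{k+1}$, achieved by integration by parts against fixed-order derivatives and careful bookkeeping of how the seminorms of this symbol scale in $k$.
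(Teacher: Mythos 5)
Your proposal is correct and follows the classical argument for Lemma 2.1 of \cite{Che1998}: the paper itself offers no proof (it simply cites Chemin), and the cited proof is exactly your scheme of inserting a cutoff equal to $1$ on the spectrum, rescaling, convolving with a Schwartz kernel via Young's inequality, and inverting $\sum_{|\alpha|=k}c_\alpha\xi^{2\alpha}=|\xi|^{2k}$ on the annulus for the lower bound. You also correctly identify (and resolve) the only delicate point, namely the geometric-in-$k$ control of $\|\check H_\alpha\|_{L^1}$, and you rightly read the exponents $3(\tfrac1a-\tfrac1b)$ in the statement as the intended $3(\tfrac1p-\tfrac1q)$.
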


It is not hard to see that $ H^s = B^s_{2, 2} $. We deduce from the Bernstein
inequalities that the following continuous embedding holds
\begin{equation*}
  B^s_{p_1, r_1}\hookrightarrow B^{s+ 3(\frac{1}{p_2}-\frac{1}{p_1})}_{p_1, r_1}
\end{equation*}
for $ p_1 \leqslant p_2 $ and $ r_1 \leqslant r_2 $.

The following inequality is called tame estimates, see \cite{Che1998}.
\begin{lem}[\cite{Che1998}]\label{lem-Besov}
Suppose that $s< \frac{3}{p}$ or $s=\frac{3}{p}$, $q=1$. Then there exists a constant $C>0$ such
that
\begin{equation*}
    \begin{split}
   \|fg\|_{B^s_{p, q}(\mathbb{R}^3)}
   \leqslant \frac{C^{s+1}}{s}\left( \|f\|_{L^{\infty}(\mathbb{R}^3)}\|g\|_{B^s_{p, q}(\mathbb{R}^3)} + \|g\|_{L^{\infty}(\mathbb{R}^3)}\|f\|_{B^s_{p, q}(\mathbb{R}^3)} \right),
    \end{split}
\end{equation*}
\end{lem}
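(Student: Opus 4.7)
The plan is to establish this as a standard Bony-paraproduct argument, following the template in Chemin's book \cite{Che1998}. The starting point is Bony's decomposition
\begin{equation*}
   fg \;=\; T_f g \,+\, T_g f \,+\, R(f,g),
\end{equation*}
where the paraproduct is $T_f g := \sum_{q} S_{q-1} f \, \Delta_q g$ and the remainder is $R(f,g) := \sum_{q} \Delta_q f \, \widetilde{\Delta}_q g$ with $\widetilde{\Delta}_q := \Delta_{q-1} + \Delta_q + \Delta_{q+1}$. By the symmetry $f \leftrightarrow g$ of the statement, it suffices to bound $T_f g$ and $R(f,g)$, and we will arrange things so that $f$ appears in $L^\infty$ in each term.

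\textbf{Paraproduct.} Each block $S_{q-1} f \, \Delta_q g$ has Fourier support in a dyadic annulus $2^q \mathcal{C}'$, so $\Delta_j (S_{q-1} f \, \Delta_q g)$ vanishes unless $|j - q| \le N_0$ for a fixed $N_0$. I would apply Hölder and the boundedness of $S_{q-1}$ on $L^\infty$ (from Lemma \ref{lem-Bern}) to write
\begin{equation*}
\|\Delta_j T_f g\|_{L^p} \;\lesssim\; \sum_{|j-q|\le N_0} \|S_{q-1} f\|_{L^\infty} \|\Delta_q g\|_{L^p} \;\lesssim\; \|f\|_{L^\infty} \sum_{|j-q|\le N_0} \|\Delta_q g\|_{L^p}.
\end{equation*}
Multiplying by $2^{js}$ and taking the $\ell^q$ norm in $j$, a discrete Young's inequality on the finite-width sum (whose cost is what produces the $C^{s+1}$ factor) yields $\|T_f g\|_{B^s_{p,q}} \lesssim \|f\|_{L^\infty} \|g\|_{B^s_{p,q}}$ with no restriction on $s$.

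\textbf{Remainder.} Here $\Delta_q f \, \widetilde{\Delta}_q g$ has Fourier support in a ball of radius $\lesssim 2^q$, so $\Delta_j R(f,g)$ picks up only the contributions with $q \ge j - N_0$. Estimating in $L^p$ by putting $\Delta_q f$ in $L^\infty$,
\begin{equation*}
2^{js}\|\Delta_j R(f,g)\|_{L^p} \;\lesssim\; \|f\|_{L^\infty} \sum_{q \ge j - N_0} 2^{(j-q)s}\, 2^{qs}\|\widetilde{\Delta}_q g\|_{L^p}.
\end{equation*}
This is the convolution of the sequence $(2^{qs}\|\widetilde{\Delta}_q g\|_{L^p})_q \in \ell^q$ with the one-sided kernel $2^{(j-q)s}\mathbf{1}_{q \ge j - N_0}$, which lies in $\ell^1$ \emph{exactly} when $s > 0$. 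This step loses a factor $\sim 1/(1-2^{-s})$, accounting for the $1/s$ in the claimed constant. Combining with the paraproduct estimate delivers the full tame inequality.

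\textbf{The main obstacle.} The delicate point, and the whole reason for the hypothesis ``$s < 3/p$ or $s = 3/p,\ q = 1$,'' is that the above remainder argument requires $s > 0$; for $s \le 0$ one has to instead estimate $\|\Delta_q f\|_{L^\infty}$ via Bernstein as $2^{3q/p}\|\Delta_q f\|_{L^p}$, which forces the convolution-summability condition to become $s + 3/p \le 3/p$ in the strict sense, i.e.\ $s < 3/p$ for $q > 1$ and $s \le 3/p$ for $q = 1$. Tracking the two cases ($s > 0$ versus $s \le 0$, and the borderline $s = 3/p$) in parallel, and showing that both ranges are covered by the single hypothesis in the lemma, is the main bookkeeping difficulty; apart from that the argument is the canonical paraproduct computation.
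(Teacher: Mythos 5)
The paper never proves this lemma --- it is quoted verbatim from Chemin's book --- so there is no in-paper argument to compare against. The body of your proposal is the canonical proof from that reference and is correct as far as it goes: Bony's decomposition, the paraproduct bound $\|T_f g\|_{B^s_{p,q}}\lesssim\|f\|_{L^\infty}\|g\|_{B^s_{p,q}}$ valid for every $s$, and the remainder bound in which $2^{js}\|\Delta_j R(f,g)\|_{L^p}$ is controlled by convolving $(2^{qs}\|\widetilde{\Delta}_q g\|_{L^p})_q$ with the one-sided kernel $2^{ks}\mathbf{1}_{k\le N_0}$, whose $\ell^1$ norm $\sim (1-2^{-s})^{-1}$ is precisely where the $C^{s+1}/s$ comes from. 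Those two steps give the tame estimate for all $s>0$, which covers every use the paper makes of the lemma ($s=1$ and $s=3/2$ with $p=2$).

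The genuine problem is your final paragraph. The hypothesis ``$s<3/p$ or $s=3/p$, $q=1$'' is not what makes the remainder summable, and the inequality is simply false for $s<0$: take $f=g=\chi(x)\cos(2^Nx_1)$ with $\chi$ a fixed bump, so that $\|f\|_{L^\infty}\sim 1$ and $\|f\|_{B^s_{p,q}}\sim 2^{Ns}\to 0$, while $f^2=\tfrac12\chi^2\bigl(1+\cos(2^{N+1}x_1)\bigr)$ has a low-frequency block $\Delta_{-1}(f^2)\approx\tfrac12\chi^2$ bounded below in $L^p$ uniformly in $N$; the right-hand side of the claimed estimate vanishes as $N\to\infty$ but the left-hand side does not. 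Your proposed repair --- Bernstein on $\Delta_q f$ to write $\|\Delta_q f\|_{L^\infty}\lesssim 2^{3q/p}\|\Delta_q f\|_{L^p}$ --- does not rescue this, because it replaces $\|f\|_{L^\infty}$ by a $B^{3/p}_{p,\cdot}$ norm of $f$, which is a different (non-tame) product law, not the inequality being claimed. The correct reading is that the lemma's printed hypothesis is a misstatement and should be $s>0$, as the constant $C^{s+1}/s$ already forces; with that hypothesis your first two steps constitute a complete proof, and the last paragraph should be deleted rather than elaborated.
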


\begin{lem}[Commutator Estimates]  \label{lem:Commun}
Let $ 1 < p < \infty $, $ s > 0 $ and $ \Lambda^s:= (-\Delta)^{\frac{s}{2}} $.
Then for any $ f, \,g \in C_{c}^{\infty}(\mathbb{R}^{3}) $,

\noindent
{\rm (1) (Lemma 3.1 of \cite{Ju2004}) }
\begin{equation*}
  \|\Lambda^s (fg)- f\Lambda^s g\|_{L^p}
  \leqslant C\left(\|\nabla f\|_{L^{p_1}}\|\Lambda^{s-1} g\|_{L^{q_1}} +
   \|\Lambda^s f\|_{L^{p_2}}\|g\|_{L^{q_2}}\right),
\end{equation*}
where  $ 1 < p_1, p_2, q_1, q_2 < \infty $ satisfy
\[
\frac{1}{p}=\frac{1}{p_1}+\frac{1}{q_1}=\frac{1}{p_2}+\frac{1}{q_2}.
\]
{\rm (2) (Lemma X1 of \cite{Kato1988}) }
\begin{equation*}
  \| \Lambda^s (fg)- f \Lambda^s g \|_{L^p}
  \leqslant C \left( \| \nabla f \|_{L^{\infty}} \| \Lambda^{s-1} g \|_{L^{p}} +
   \| \Lambda^s f \|_{L^{p}} \| g \|_{L^{\infty}} \right).
\end{equation*}
\end{lem}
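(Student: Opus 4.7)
The plan is to establish both parts of the lemma by Bony's paraproduct decomposition together with the Bernstein inequalities of Lemma~\ref{lem-Bern}. Write
\[
fg = T_f g + T_g f + R(f,g),
\]
with paraproduct $T_f g = \sum_{q}S_{q-1} f \cdot \Delta_q g$ and remainder $R(f,g) = \sum_{|j-k|\le 1}\Delta_j f \,\Delta_k g$, so that
\[
\Lambda^s(fg) - f\Lambda^s g = [\Lambda^s, T_f] g + \bigl(\Lambda^s T_g f - T_{\Lambda^s g} f\bigr) + \bigl(\Lambda^s R(f,g) - R(f,\Lambda^s g)\bigr).
\]
I would estimate the three pieces separately and sum; part~(1) uses H\"older with $(p_1,q_1)$ and $(p_2,q_2)$, while part~(2) uses the endpoint choices $(\infty, p)$ and $(p,\infty)$.

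The key piece is the paraproduct commutator $[\Lambda^s, T_f]g = \sum_q [\Lambda^s, S_{q-1} f]\Delta_q g$. Each summand is spectrally supported in an annulus $\{|\xi|\sim 2^q\}$, so $\Lambda^s$ acts as a smoothed symbol $\sigma_q(\xi)=|\xi|^s\varphi(2^{-q}\xi)$ whose kernel $K_q$ satisfies $\int|h||K_q(h)|\,dh\lesssim 2^{q(s-1)}$. Writing the commutator in kernel form and applying the first-order Taylor expansion
\[
S_{q-1}f(x-h) - S_{q-1}f(x) = -\int_0^1 h\cdot \nabla S_{q-1}f(x-th)\,dt,
\]
together with Minkowski and Young's inequality, yields $\|[\Lambda^s, S_{q-1}f]\Delta_q g\|_{L^p}\lesssim 2^{q(s-1)}\|\nabla f\|_{L^{p_1}}\|\widetilde\Delta_q g\|_{L^{q_1}}$ with an enlarged Littlewood--Paley block $\widetilde\Delta_q$. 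Almost-orthogonal summation in $q$, using the characterization $\|\Lambda^{s-1}g\|_{L^{q_1}}\simeq \|(2^{q(s-1)}\Delta_q g)\|_{L^{q_1}(\ell^2)}$, produces the first contribution on the right-hand side of both (1) and (2); for part~(2) one takes $p_1=\infty$, pulls $\|\nabla f\|_{L^\infty}$ out of the convolution, and the remaining $\ell^2$ sum collapses to $\|\Lambda^{s-1}g\|_{L^p}$.

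The remaining two pieces are handled by pure spectral localization. In $\Lambda^s T_g f - T_{\Lambda^s g}f$, the factor $\Delta_q f$ dominates $S_{q-1}g$ in frequency, so $S_{q-1}g\cdot\Delta_q f$ is localized at $\sim 2^q$ and $\Lambda^s$ acts essentially as multiplication by $2^{qs}$; standard paraproduct estimates then give the bound by $\|\Lambda^s f\|_{L^{p_2}}\|g\|_{L^{q_2}}$ (resp.\ $\|\Lambda^s f\|_{L^p}\|g\|_{L^\infty}$). The remainder $\Lambda^s R(f,g)-R(f,\Lambda^s g)$ is analogous: each piece $\Delta_j f\,\Delta_k g$ with $|j-k|\le 1$ has spectrum in a ball of radius $\lesssim 2^j$, and the same reasoning applies (here $s>0$ is used so that the resulting geometric series in $j$ converges). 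The main obstacle is the paraproduct commutator: naively exchanging $\Lambda^s$ with a pointwise product costs one derivative, and it is precisely the Taylor expansion of the smooth low-frequency factor $S_{q-1}f$ against the annular kernel of $\Lambda^s$ that recovers that derivative and produces the $\|\nabla f\|$-type factor. This is the strategy of Kenig--Ponce--Vega and Kato--Ponce, which we implement following the presentations of \cite{Ju2004, Kato1988} for parts (1) and (2) respectively.
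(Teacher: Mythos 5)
The paper does not prove this lemma at all: it is imported verbatim, with part (1) attributed to Lemma 3.1 of \cite{Ju2004} and part (2) to Lemma X1 of \cite{Kato1988}, so there is no internal proof to match your argument against. Your paraproduct proof is therefore necessarily a different route, and it is the standard modern one (Kenig--Ponce--Vega style, as in Ju's reference), whereas the original Kato--Ponce argument for part (2) proceeds instead through Coifman--Meyer multiplier theory and an interpolation in $s$; your unified Littlewood--Paley treatment has the advantage of delivering both the H\"older-split version (1) and the endpoint version (2) from one decomposition, at the price of more harmonic-analysis infrastructure. The sketch itself is essentially sound: the three-term splitting of $\Lambda^s(fg)-f\Lambda^s g$ is correct, the first-moment bound $\int|h|\,|K_q(h)|\,dh\lesssim 2^{q(s-1)}$ for the annulus-localized symbol $|\xi|^s\varphi(2^{-q}\xi)$ is the right mechanism for recovering the derivative on $f$, and you correctly identify where $s>0$ is needed (summing the remainder and the low-frequency tails of $T_{\Lambda^s g}f$). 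Two points are glossed over and would need to be made honest in a full write-up: (i) passing from the blockwise bounds $\|c_q\|_{L^p}\lesssim 2^{q(s-1)}\|\nabla f\|_{L^{p_1}}\|\widetilde\Delta_q g\|_{L^{q_1}}$ to the product $\|\nabla f\|_{L^{p_1}}\|\Lambda^{s-1}g\|_{L^{q_1}}$ requires the vector-valued (Fefferman--Stein) maximal inequality or an equivalent square-function argument, since $\|\Lambda^{s-1}g\|_{L^{q_1}}$ is an $L^{q_1}(\ell^2)$ quantity and an $\ell^1$-in-$q$ summation of the $L^p$ norms would lose the endpoint cases; and (ii) the inhomogeneous low-frequency block $\Delta_{-1}\Lambda^s g$ needs a separate remark because $|\xi|^s\chi(\xi)$ is not smooth at the origin (its kernel is still integrable for $s>0$, so it is an $L^p$ multiplier, but this is not covered by Lemma~\ref{lem-Bern}, which assumes annular support). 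Neither issue is a genuine gap in the strategy, only in the level of detail.
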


%%%%%%%%%%%%%%%%%%%%%%%%%%%%%%%%%%%%%%%%%%%%%%%%%%%%%%%%%%%

\renewcommand{\theequation}{\thesection.\arabic{equation}}
\setcounter{equation}{0} %%%%%%%%%%%%%%%%%%%%%%%%%%

%%%%%%%%%%%%%%%%%%%%%%%%%%%%%%%%%%%%%%%%%%%%%%%%%%%%%%%%%%%
\section{Proof of Theorem \ref{thm-main}}
The purpose of this section is to derive some priori estimates and then complete the proof of Theorem \ref{thm-main}.
Let us first state the priori estimates for the system \eqref{Hall-MHD}.
\begin{prop} \label{prop-1}
Under the assumptions of Theorem \ref{thm-main}, if $ (u, \,B) $ is a smooth solution
to the system \eqref{Hall-MHD} then there is a constant $ C $ depending on $ T $ and $ (u_{0}, \,B_{0}) $ such that
\begin{equation*}
    \| (u(t), \, B(t)) \|_{H^2}^2
  + \int_{0}^{t}{\| u(s) \|_{H^3}}\mathrm{d}s
  + \int_{0}^{t}{\| B(s) \|_{H^3}^2}\mathrm{d}s
  + \int_{0}^{t}{\| (u(s), B(s)) \|_{W^{1, \infty}}}\mathrm{d}s
  \leqslant C,
\end{equation*}
for all $ 0 < t < T $.
\end{prop}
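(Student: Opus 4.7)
The plan is to derive the a priori bounds via a chain of estimates on the reduced axisymmetric system (2.3)--(2.5), moving step by step from basic energy identities up to $H^2$ regularity for $B$. The geometry of the axisymmetric swirl-free case is crucial: it reduces the Hall term to $-\frac{2}{r}B^\theta \partial_z B^\theta\, e^\theta$, and decouples the dynamics of $\Pi = B^\theta/r$ and $\Omega = \omega^\theta/r$ into the simple transport-diffusion pair (2.5). Testing (1.1) against $(u, B)$ and exploiting
\[
  \int \operatorname{curl}(\operatorname{curl}B\times B)\cdot B\,dx
  = \int (\operatorname{curl}B\times B)\cdot \operatorname{curl}B\,dx = 0
\]
yields the global energy identity, so $(u, B)\in L^\infty_t L^2$ and $\nabla(u, B)\in L^2_t L^2$.

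Next, multiplying the $\Pi$-equation in (2.5) by $|\Pi|^{\alpha-2}\Pi$, the transport term drops by $\dive u = 0$, the Hall-type nonlinearity $2\int \Pi\partial_z\Pi\,|\Pi|^{\alpha-2}\Pi\,dx = \tfrac{2}{\alpha+1}\int\partial_z|\Pi|^{\alpha+1}\,dx$ vanishes by integration by parts, and the operator $\Delta + \tfrac{2}{r}\partial_r$ is dissipative. This gives $\Pi\in L^\infty_t L^\alpha$ together with an $L^2_t H^1$ bound on $|\Pi|^{\alpha/2}$. An $L^2$ energy estimate on the $\Omega$-equation, with source $-\partial_z\Pi^2$ absorbed via H\"older and the previous bounds, produces $\Omega\in L^\infty_t L^2\cap L^2_t H^1$, which by Lemma 2.1 controls $\|u^r/r\|_{L^\infty}$. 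Testing the vorticity equation
\[
  \partial_t\omega^\theta + u\cdot\nabla\omega^\theta - \frac{u^r}{r}\omega^\theta
  = \Bigl(\Delta - \frac{1}{r^2}\Bigr)\omega^\theta - \partial_z\!\left(\frac{(B^\theta)^2}{r}\right)
\]
against $|\omega^\theta|^{\sigma-2}\omega^\theta$ and using Lemma 2.2 to trade $\|\nabla u\|_{L^\sigma}$ for $\|\omega\|_{L^\sigma}$ then yields $\omega\in L^\infty_t L^\sigma\cap L^2_t H^1\cap L^1_t H^2$.

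Since $\sigma > 3$, a Littlewood-Paley decomposition $\nabla u = \sum_q \nabla\Delta_q u$, with low frequencies controlled by $\|\omega\|_{L^2}$ and high frequencies by $\|\omega\|_{H^2}$ via the Bernstein inequalities of Lemma 3.1, gives $u\in L^1_t\operatorname{Lip}$. Analogously, the embedding $W^{1,p}\hookrightarrow C_0$ valid for $p > 3$, combined with the $\Pi$-controls and a frequency split applied to the $B^\theta$-equation, produces $\nabla B\in L^2_t L^\infty$; this is where the hypothesis $\alpha > 6$ is consumed, since one needs an exponent strictly larger than $3$ after Sobolev embedding.

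The final and most delicate step is the $H^2$-closure for $B$, which I view as the main obstacle. Apply $\Lambda^2$ to the $B$-equation and test against $\Lambda^2 B$; the worst term is the Hall contribution, which by the Kato-Ponce commutator estimate of Lemma 3.4(2) is bounded by
\[
  \|\nabla B\|_{L^\infty}\bigl(\|\Lambda^2 B\|_{L^2}^2 + \|\Lambda^3 B\|_{L^2}\|\Lambda^2 B\|_{L^2}\bigr).
\]
Absorbing the $\|\Lambda^3 B\|_{L^2}^2$ term into the dissipation coming from $\Delta B$ and applying Gronwall with the $L^1_t$-integrable factor $\|\nabla B\|_{L^\infty}^2$ closes the bound and delivers $B\in L^\infty_t H^2\cap L^2_t H^3$. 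Once this is in hand, the source $B\cdot\nabla B$ lies in $L^2_t H^1$, and a maximal-regularity argument for the heat equation satisfied by $u$ then furnishes $u\in L^1_t H^3$, completing all the bounds claimed in the proposition. The reason this last step is the bottleneck is that the Hall term inserts $\operatorname{curl}B$ inside a nonlinear product, so one cannot even initiate the $H^2$ estimate without first securing $\nabla B\in L^2_t L^\infty$, which is precisely what forces the restrictions $\sigma > 3$ and $\alpha > 6$.
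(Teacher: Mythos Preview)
Your overall strategy matches the paper's eight-step scheme, and your treatment of $\Pi$, $\Omega$, and the final $H^2$-closure for $B$ via Kato--Ponce is correct in spirit. There is, however, a genuine ordering gap in the middle of the argument.

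You assert that testing the $\omega^\theta$-equation against $|\omega^\theta|^{\sigma-2}\omega^\theta$ already yields $\omega\in L^1_t H^2$, and you then feed this into your Littlewood--Paley bound for $u\in L^1_t\operatorname{Lip}$ (``high frequencies by $\|\omega\|_{H^2}$''). But the $L^\sigma$-test only produces $\omega\in L^\infty_t L^\sigma$ together with $\nabla|\omega^\theta|^{\sigma/2}\in L^2_t L^2$. An $H^2$ bound on $\omega$ would require $\partial_z(\Pi B^\theta)=B^\theta\partial_z\Pi+\Pi\,\partial_z B^\theta\in L^1_t L^2$, and the second summand needs $\nabla B$ in a space strictly better than the $L^2_t L^2$ coming from the basic energy identity---in fact this is precisely where $\nabla B\in L^2_t L^\infty$ gets consumed. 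So your chain is circular: you use $\omega\in L^1_t H^2$ to reach $u\in L^1_t\operatorname{Lip}$, which is needed for $\nabla B\in L^2_t L^\infty$, which is in turn needed for $\omega\in L^1_t H^2$. (Note also that $\omega\in L^1_t H^2$ and $u\in L^1_t H^3$ are essentially the same statement, so your final maximal-regularity step would be redundant if the earlier claim were valid.)

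The paper breaks this circle by obtaining $u\in L^1_t\operatorname{Lip}$ \emph{without} any $H^2$ control on $\omega$: one writes the vorticity equation in Duhamel form, exploits the localized heat-semigroup decay $\|e^{t\Delta}\Delta_q f\|_{L^p}\lesssim e^{-ct2^{2q}}\|\Delta_q f\|_{L^p}$, and integrates in time to gain two derivatives on the source. This gives $\omega\in L^1_t B^{3/p}_{p,1}$ for $3<p\le 6$ using only $\omega_0\in B^{3/p-2}_{p,1}$, $u\otimes u\in L^1_t B^{3/p}_{p,1}$ (controlled through $u\in L^2_t L^\infty$ and $\nabla^2 u\in L^2_t L^2$ from the $L^2$-vorticity estimate), and $\Pi B^\theta\in L^1_t L^p$; the embedding $B^{1+3/p}_{p,1}\hookrightarrow W^{1,\infty}$ then yields the Lipschitz bound. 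Only \emph{after} this, and after the ensuing $\nabla B\in L^2_t L^\infty$ (obtained not by a frequency split on the $B^\theta$-equation but by an $H^1$-energy estimate on $B$ followed by $L^\gamma$ heat regularity with $\gamma=\tfrac{6\alpha}{6+\alpha}>3$), does the paper go back to the vorticity equation and extract $\omega\in L^1_t H^2$.
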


\begin{proof}
The proof is split into the following eight steps.

\noindent{\bf\textit{Step 1. $ L^{\infty}([0, t]; L^2(\mathbb{R}^3))$ estimate of $ (u, \,B) $.}}

Multiplying the first and second equations in \eqref{Hall-MHD} by $ u $ and $ B $, respectively, integrating over $\mathbb{R}^3$
and adding up, we arrive at
\begin{equation}    \label{Ineq-Prop3.1-1}
\begin{split}
   & \frac{1}{2}\frac{\mathrm{d}}{\mathrm{d}t}
   \| (u(\cdot, \,t), \,  B(\cdot, \,t)) \|_{L^2}^2
   + \| (\nabla u(\cdot, \,t), \,\nabla B(\cdot, \,t)) \|_{L^2}^2 \\
 = & \int{(B \cdot \nabla B) \cdot u}\,\mathrm{d}x
   + \int{(B \cdot \nabla u) \cdot B}\,\mathrm{d}x
   + \int{\operatorname{curl}(\operatorname{curl}B \times B) \cdot B}\,\mathrm{d}x \\
= & \int{(B \cdot \nabla B) \cdot u}\,\mathrm{d}x
   - \int{(B \cdot \nabla B) \cdot u}\,\mathrm{d}x
   + \int{(\operatorname{curl}B \times B) \cdot \operatorname{curl}B}\,\mathrm{d}x \\
= & 0.
\end{split}
\end{equation}

Then, integrating \eqref{Ineq-Prop3.1-1} over $ [0, \,t] $ with respect to $ t $ yields
\begin{equation}
     \begin{split}
   \| (u(\cdot, \,t), \,B(\cdot, \,t)) \|_{L^2}^2
 + \int_0^t \|(\nabla u(\cdot, \,s), \nabla B(\cdot, \,s))\|_{L^2}^2 \, \mathrm{d}s
 \leqslant \| (u_0, \,B_0) \|_{L^2}^2.
     \end{split}
\end{equation}

\noindent{\bf\textit{Step 2. $  L^{\infty}([0, t]; L^2(\mathbb{R}^3))$ estimate
of $ \Omega$ .}}
%= \frac{\omega}{r} $.}}

For any $ p \in [2, \,\alpha] \cap [2, \,\infty) $ with
$ \alpha  > 6 $,
multiplying the first equation in \eqref{Hall-MHD-3} by $ | \Pi |^{p - 2} \Pi $,
we have
\begin{equation}   \label{Eq:Pf-Prop3.2-1}
\begin{split}
  & \frac{1}{p}\frac{\mathrm{d}}{\mathrm{d}t}\| \Pi \|_{L^{p}}^{p}
  + (p - 1)\int{ |\Pi |^{p - 2}\, | \nabla \Pi |^{2}}\mathrm{d}x \\
 = &\,\frac{2}{p}\int{\frac{1}{r}\partial_{r}|\Pi|^{p}}\mathrm{d}x
   + \frac{2}{p + 1}\int{\partial_{z}|\Pi|^{p + 1}}\mathrm{d}x \\
 = &\,\frac{4 \pi}{p}\int_{-\infty}^{\infty}\left(\int_{0}^{\infty}{\partial_{r}|\Pi|^{p}}
     \mathrm{d}r \right)\mathrm{d}z
   + \frac{4 \pi}{p + 1}\int_{0}^{\infty}r\left(\int_{-\infty}^{\infty}{\partial_{z}|\Pi|^{p + 1}}\mathrm{d}z \right)\mathrm{d}r\\
 \leqslant & \, 0,
\end{split}
\end{equation}
here we have used the fact that
$ \displaystyle\lim_{|z| \to \infty}\Pi(r, \,z, \,t) = 0 $ and
$ \displaystyle\lim_{r \to \infty}\Pi(r, \,z, \,t) = 0 $.
Integrating \eqref{Eq:Pf-Prop3.2-1} over $ [0, \,t] $ with respect to $ t $ gives
\begin{equation} \label{p2-1}
   \|\Pi(t)\|_{L^p}
  \leqslant \|\Pi_0\|_{L^p}
  \leqslant \|\Pi_0\|_{L^2}^{\frac{2 (\alpha - p)}{(\alpha - 2) p}}
            \|\Pi_0\|_{L^\alpha}^{\frac{\alpha(p - 2)}{(\alpha - 2) p}},
      \qquad \forall\, p \in [2, \,\alpha] \cap [2, \,\infty).
\end{equation}
In the above process, there are two special cases:

\textbf {Case 1.} $ \alpha = \infty $,
letting $ p \to \infty $ in \eqref{p2-1}, we obtain
\begin{equation*}
   \|\Pi(t)\|_{L^\infty}
  \leqslant   \|\Pi_0\|_{L^\infty},
\end{equation*}
which together with \eqref{p2-1} yields that \eqref{p2-1}
holds for all $ p \in [2, \,\alpha] $.

\textbf {Case 2.} $ p = 2 $, then \eqref{Eq:Pf-Prop3.2-1} can be rewritten as
\[
    \frac{1}{2}\frac{\mathrm{d}}{\mathrm{d}t}\| \Pi(t) \|_{L^{2}}^{2}
  + \| \nabla \Pi(t) \|_{L^{2}}^{2}
  \leqslant 0,
\]
which implies
\begin{equation}  \label{p2-1-2}
    \| \Pi(t)\|_{L^2}^{2} + \int_{0}^{t}\| \nabla \Pi(t) \|_{L^{2}}^{2}\mathrm{d}t \leqslant \|\Pi_0\|_{L^2}^2.
\end{equation}

On the other hand, taking $ L^2-$inner product of the second
equation in \eqref{Hall-MHD-3} with $ \Omega $ yields
\begin{align*}
  \frac{1}{2}\frac{\mathrm{d}}{\mathrm{d}t}\|\Omega\|_{L^2}^2
  + \| \nabla \Omega \|_{L^2}^2
& = -\int (u\cdot \nabla \Omega)\cdot \Omega \,\mathrm{d}x
 +\int \frac{1}{r}\partial_r | \Omega |^2 \,\mathrm{d}x
 -\int\Omega\partial_z \Pi^2 \, \mathrm{d}x \\
& = \frac{1}{2}\int{| \Omega |^{2} \operatorname{div}u}\,\mathrm{d}x
   -2 \pi \int_{-\infty}^{\infty}{| \Omega(0, z, t) |^{2}}\mathrm{d}z
   - \int{ \Omega \partial_{z}\Pi ^{2} }\,\mathrm{d}x \\
& \leqslant - \int{ \Omega \partial_{z}\Pi ^{2} }\,\mathrm{d}x,
\end{align*}
here we have used the fact that $ \operatorname{div}u = 0 $
and $ \displaystyle\lim_{r \to \infty}\Omega(r, \,z, \,t) = 0 $.
A directly calculating gives
\[
  - \int{ \Omega \partial_{z}\Pi ^{2} }\,\mathrm{d}x
 = -2\pi \int_{0}^{\infty}r\left(\int_{-\infty}^{\infty}
   \Omega \partial_{z}\Pi ^{2} \mathrm{d}z\right)\mathrm{d}r
 = 2 \pi \int_{0}^{\infty}r\left(\int_{-\infty}^{\infty}
    \Pi ^{2}\partial_{z}\Omega \mathrm{d}z\right)\mathrm{d}r.
\]
Therefore
\begin{equation*}
\begin{split}
 \frac{1}{2}\frac{\mathrm{d}}{\mathrm{d}t}\|\Omega\|_{L^2}^2
  + \| \nabla \Omega \|_{L^2}^2& = \int{ \Pi ^{2} \partial_{z} \Omega  }\,\mathrm{d}x
 \leqslant \| \Pi \|_{L^{4}}^{2} \| \nabla \Omega \|_{L^2},
     \end{split}
\end{equation*}
which together with Young's inequality and \eqref{p2-1} yields
\begin{equation}\label{p2-3}
  \frac{\mathrm{d}}{\mathrm{d}t} \|\Omega\|_{L^2}^2 + \|\nabla \Omega\|_{L^2}^2
  \leqslant \|\Pi_0\|_{L^4}^4
  \leqslant \|\Pi_0\|_{L^2}^\frac{2\alpha - 8}{\alpha - 2}
            \|\Pi_0\|_{L^\alpha}^\frac{2\alpha}{\alpha - 2}.
\end{equation}
%In view of
%\begin{equation*}
%     \begin{split}
%|\nabla B|^2=|\nabla B^\theta|^2+|\Pi|^2.
%     \end{split}
%\end{equation*}
%we know that
%\begin{equation*}
%  \|\Pi_0\|_{L^2}\leqslant \|B_0\|_{H^1}
%\quad \text{and} \quad
%\|\Pi_0\|_{L^4}\leqslant \|\nabla B_0\|_{L^4}\lesssim\|B_0\|_{H^2},
%\end{equation*}
%where we have used Sobolev imbedding $H^1(\mathbb{R}^3)\hookrightarrow L^p(\mathbb{R}^3)$ $(2 \leqslant p \leqslant 6)$.
Consequently, integrating \eqref{p2-3} over $ [0, \,t] $ with respect to $ t $ implies
\begin{equation}\label{p2-2}
   \|\Omega(t)\|_{L^2}^2+\int_0^t\|\nabla \Omega(s)\|_{L^2}^2 \, ds
   \lesssim \|\Omega_0\|_{L^2}^2
         + t \|\Pi_0\|_{L^2}^\frac{2\alpha - 8}{\alpha - 2}
            \|\Pi_0\|_{L^\alpha}^\frac{2\alpha}{\alpha - 2}.
\end{equation}
Furthermore,
it is easy to deduce from H\"{o}lder's inequality, Lemma \ref{lem-BS} and \eqref{p2-2} that
\begin{equation}  \label{Ineq:velocity}
   \int_0^t\|\frac{u^r}{r}(s)\|_{L^\infty} \,\mathrm{d}s
\leqslant \sup_{0\leqslant s\leqslant t}\|\Omega(\cdot, \,s)\|_{L^2}^{\frac{1}{2}}
   \int_0^t\|\nabla \Omega(s)\|_{L^2}^{\frac{1}{2}} \,\mathrm{d}s
\lesssim t^\frac{3}{4} + t^\frac{5}{4}.
\end{equation}

\noindent{\bf\textit{Step 3. $  L^{\infty}([0, t]; L^2(\mathbb{R}^3))$ estimate
of $ \nabla u $.}}

For any $ p \in [2, \,\infty) $, multiplying the third equation in \eqref{MHD-2}
by $|B^\theta|^{p-2}B^\theta $, thanks to integrating by
parts and H\"{o}lder's inequality, we have
\begin{equation*}
   \frac{1}{p}\frac{\mathrm{d}}{\mathrm{d}t}\|B^\theta\|_{L^p}^p
  + (p - 1)\!\int {| B^{\theta} |^{ p - 2}
     \,| \nabla B^{\theta}|^{2} }\,\mathrm{d}x
  +  \| \frac{| B^{\theta} |^{\frac{p}{2}}}{r} \|_{L^{2}}^{2}
 = \int |B^\theta|^p|\frac{u^r}{r}|\, \mathrm{d}x
 \leqslant\|B^\theta\|_{L^p}^p\|\frac{u^r}{r}\|_{L^\infty},
\end{equation*}
which together with  Gronwall's inequality and \eqref{Ineq:velocity} yields
\begin{equation*}
   \|B^\theta\|_{L^p}^{p} \leqslant
       \|B^\theta_0\|_{L^p}^{p}
  \exp\left(\int_{0}^{t} \|\frac{u^r}{r}(s)\|_{L^\infty} \, \mathrm{d}s\right)
   \lesssim \|B^\theta_0\|_{L^p}^{p}\exp(
  t^{\frac{3}{4}} + t^{\frac{5}{4}} ).
\end{equation*}
Thus
\begin{equation}\label{p3-1}
   \|B^\theta\|_{L^p}
   \lesssim \|B^\theta_0\|_{L^p}\exp(
  t^{\frac{3}{4}} + t^{\frac{5}{4}} ),
\qquad \forall\,  2 \leqslant p < \infty.
\end{equation}
Combining the inequality $ \|B^\theta_0\|_{L^p} \leqslant \|B^\theta_0\|_{L^2}^{\frac{2}{p}}
\|B^\theta_0\|_{L^\infty}^{1 - \frac{2}{p}} $,
we know that \eqref{p3-1} holds for all
$ p \in [2, \,\infty] $ by passing $ p \to \infty $ in \eqref{p3-1}.

Recall that in cylindrical coordinates the vorticity of the swirl-free axisymmetric velocity $ \omega $ is given in \eqref{doubleCurl}, i.e.,
\[
\omega = \operatorname{curl}u = \omega^\theta  e^\theta
\]
which satisfies
\begin{equation}   \label{Eq:omega}
   \partial_t \omega^\theta +u\cdot\nabla \omega^\theta-(\Delta-\frac{1}{r^2})\omega^\theta-\frac{u^r}{r}\omega^\theta
   =-\partial_z\frac{(B^\theta)^2}{r}.
\end{equation}

Taking the $ L^2 $ inner product of \eqref{Eq:omega} with $ \omega^\theta $ and then using
the incompressible condition $\operatorname{div}u = 0 $ leads
\begin{equation}\label{p4-1}
   \begin{split}
   \frac{1}{2}\frac{\mathrm{d}}{\mathrm{d}t}\|\omega^\theta\|_{L^2}^2 +\|\nabla\omega^\theta\|_{L^2}^2+\|\frac{\omega^\theta}{r}\|_{L^2}^2
   &\leqslant \int \frac{u^r}{r}|\omega^\theta|^2 \, \mathrm{d}x
     - \int \partial_z\frac{(B^\theta)^2}{r}\omega^\theta \,\mathrm{d}x\\
   &:= I_1 + I_2.
   \end{split}
\end{equation}
For $I_1$, we have
\begin{equation}\label{p4-2}
     \begin{split}
   |I_1|\leqslant \|\frac{u^r}{r}\|_{L^\infty}\|\omega^\theta\|_{L^2}^2.
     \end{split}
\end{equation}
For $I_2$, it follows from integration by parts that
\begin{equation}\label{p4-3}
     \begin{split}
   |I_2| =\left|\int \frac{(B^\theta)^2}{r} \partial_z \omega^\theta \,\mathrm{d}x \right|
   & \leqslant \|B^\theta\|_{L^\infty}\|\frac{B^\theta}{r}\|_{L^2}\|\partial_z \omega^\theta\|_{L^2}\\
   &\leqslant \frac{1}{2} \|B^\theta\|_{L^\infty}^2\|\Pi\|_{L^2}^2
    + \frac{1}{2}\|\partial_z \omega^\theta\|_{L^2}^2.
     \end{split}
\end{equation}
\noindent

Substituting \eqref{p4-2} and \eqref{p4-3} into \eqref{p4-1} and using \eqref{p3-1},
we obtain
\begin{equation*}
     \begin{split}
   \frac{\mathrm{d}}{\mathrm{d}t}\|\omega^\theta\|_{L^2}^2 +\|\nabla\omega^\theta\|_{L^2}^2+\|\frac{\omega^\theta}{r}\|_{L^2}^2
   &\lesssim \|\frac{u^r}{r}\|_{L^\infty}\|\omega^\theta\|_{L^2}^2 + \|B^\theta\|_{L^\infty}^2\|\Pi\|_{L^2}^2\\
   &\lesssim \|\frac{u^r}{r}\|_{L^\infty}\|\omega^\theta\|_{L^2}^2 + \|\Pi_{0}\|_{L^2}^2
  \|B^\theta_0\|_{H^2}^2 \exp( t^{\frac{3}{4}} + t^{\frac{5}{4}}),
     \end{split}
\end{equation*}
where we have used the Sobolev imbedding $H^s(\mathbb{R}^3)\hookrightarrow L^\infty(\mathbb{R}^3)$ for $s>\frac{3}{2}$.
And then, we deduce from the Gronwall inequality and \eqref{Ineq:velocity} that
\begin{equation*}
     \begin{split}
  &  \|\omega^\theta(t)\|_{L^2}^2
   + \int_0^t \|\nabla\omega^\theta(s)\|_{L^2}^2 \, \mathrm{d}s +
    \int_0^t \|\frac{\omega^\theta}{r}(s)\|_{L^2}^2 \, \mathrm{d}s\\
   \lesssim\,&\, \left( \|\omega_0^\theta\|_{L^2}^2
   + \|\Pi_{0}\|_{L^2}^2\|B^\theta_0\|_{H^2}^2
  \int_0^t \exp(s^\frac{3}{4} +  s^\frac{5}{4})\, \mathrm{d}s   \right)
   \exp \left(\int_0^t \|\frac{u^r}{r}(s)\|_{L^\infty} \, \mathrm{d}s \right)\\
   \lesssim\,&\, (1+t)\exp(t^{\frac{3}{4}} + t^{\frac{5}{4}}).
     \end{split}
\end{equation*}

In view of
\begin{equation*}
     \begin{split}
   \|\omega\|_{L^2}= \|\omega^\theta\|_{L^2},\quad \mbox{and} \quad  \|\nabla \omega\|_{L^2}^2= \|\nabla \omega^\theta\|_{L^2}^2 + \|\frac{\omega^\theta}{r}\|_{L^2}^2,
     \end{split}
\end{equation*}
we arrive at
\begin{equation}  \label{Ineq:VorEst}
   \|\omega(t)\|_{L^2}^2 +\int_0^t \|\nabla\omega(\tau)\|_{L^2}^2 \, d\tau
  \lesssim (1+t)\exp( t^{\frac{3}{4}} + t^{\frac{5}{4}}),
\end{equation}
which together with Lemma \ref{Lem:grad-curl} yields
\begin{equation}\label{c2-1}
         \|\nabla u(\cdot, \,t)\|_{L^2}^2
       + \int_0^t \|\nabla^2 u(\cdot, \,s)\|_{L^2}^2 \, \mathrm{d}s
 \lesssim (1+t) \exp( t^{\frac{3}{4}} + t^{\frac{5}{4}}).
\end{equation}

\noindent
Moreover, Lemma \ref{lem-BS} and \eqref{Ineq:VorEst} lead to
\begin{equation}    \label{Ineq:L2Linf}
     \begin{split}
   \int_0^t \|u(s)\|_{L^\infty}^2 \, \mathrm{d}s
   &\lesssim  \int_0^t \|\omega^\theta(s)\|_{L^2}\|\nabla\omega^\theta(s)\|_{L^2} \,
     \mathrm{d}s\\
   &\lesssim  \sup_{0\leqslant s\leqslant t}\|\omega^\theta(s)\|_{L^2}
   \left(\int_0^t \|\nabla\omega^\theta(s)\|_{L^2}^2 \, \mathrm{d}s\right)^{\frac{1}{2}}
    \left(\int_0^t 1 \, \mathrm{d}s \right)^{\frac{1}{2}}\\
   &\lesssim (1+t)^{\frac{3}{2}}\exp( t^{\frac{3}{4}} + t^{\frac{5}{4}}).
     \end{split}
\end{equation}

\noindent
\textbf{\textit{Step 4.  $ L^{\infty}([0, \,t]; L^{\infty}(\mathbb{R}^3)) $ estimate for $ u $.}}

For any $ \sigma \in (3, \,\alpha] \cap (3, \,\infty) $, taking the $ L^2 $ inner product of \eqref{Eq:omega}
with $ |\omega^\theta|^{\sigma - 2} \omega^\theta $ yields
\begin{equation*}
     \begin{split}
&   \frac{1}{\sigma}\frac{\mathrm{d}}{\mathrm{d}t}\|\omega^\theta\|_{L^{\sigma}}^{\sigma}
 +\frac{4(\sigma - 1)}{\sigma^2}\|\nabla(|\omega^\theta|^{\frac{\sigma}{2}})\|_{L^2}^2
 +\|\frac{|\omega^\theta|^{\frac{\sigma}{2}}}{r}\|_{L^2}^2 \\
\leqslant
& \,\int \frac{u^r}{r}|\omega^\theta|^{\sigma} \,\mathrm{d}x
 - \int |\omega^\theta|^{\sigma - 2} \omega^\theta\partial_z\frac{(B^\theta)^2}{r}\,\mathrm{d}x\\
\leqslant
& \;\|\frac{u^r}{r}\|_{L^\infty}\|\omega^\theta\|_{L^\sigma}^{\sigma}
 + \int \frac{(B^\theta)^2}{r} \partial_z(|\omega^\theta|^{\sigma - 2} \omega^\theta )\,\mathrm{d}x.
     \end{split}
\end{equation*}
Due to the fact that
\begin{equation}   \label{InnerOmega}
    \partial_z(|\omega^\theta|^{\sigma - 2} \omega^\theta )
  = | \omega^\theta |^{\sigma - 2} \partial_z\omega^\theta
    + \omega^\theta\,\partial_z(|\omega^\theta|^{\sigma - 2})
 = \frac{2(\sigma - 1)}{\sigma}| \omega^\theta |^{\frac{\sigma}{2} - 1}
   \operatorname{sign}(\omega^{\theta})
   \partial_z\big(|\omega^\theta|^{\frac{\sigma}{2}}\big),
\end{equation}
we have
\begin{align}   \label{Esti-Sec}
     \int \frac{(B^\theta)^2}{r} \partial_z(|\omega^\theta|^{\sigma - 2} \omega^\theta )\,\mathrm{d}x  \nonumber
   & \leqslant
  \frac{2(\sigma - 1)}{\sigma} \int | B^\theta |\, \frac{| B^\theta |}{r} \,
     | \omega^\theta |^{\frac{\sigma}{2} - 1}  \,
   | \nabla \big(|\omega^\theta|^{\frac{\sigma}{2}}\big) |\,\mathrm{d}x \\
     & \leqslant \frac{2(\sigma - 1)}{\sigma}
      \|B^\theta\|_{L^\infty}\| \Pi \|_{L^\sigma}
     \|\nabla (|\omega^\theta |^{\frac{\sigma}{2}}) \|_{L^2}
     \|\omega^\theta\|_{L^\sigma}^{\frac{\sigma}{2} - 1}\\ \nonumber
     &\leqslant
  \frac{2(\sigma - 1)}{\sigma^2}\|\nabla (|\omega^\theta |^{\frac{\sigma}{2}}) \|_{L^2}^{2}
   +  C_{\sigma} \|B^\theta\|_{L^\infty}^{\sigma}
      \| \Pi \|_{L^\sigma}^{\sigma}
   + \|\omega^\theta\|_{L^\sigma}^{\sigma},
\end{align}
where $ C_{\sigma} > 0 $ is a constant depending only on $ \sigma $.

Inserting \eqref{Esti-Sec} into \eqref{InnerOmega}, we obtain
\begin{equation*}
     \begin{split}
   \frac{\mathrm{d}}{\mathrm{d}t}\|\omega^\theta\|_{L^\sigma}^{\sigma}
   \leqslant \big(1+ \|\frac{u^r}{r}\|_{L^\infty}\big)\|\omega^\theta\|_{L^\sigma}^\sigma
   + C_{\sigma} \|B^\theta\|_{L^\infty}^{\sigma}
      \| \Pi \|_{L^\sigma}^{\sigma},
\quad \forall\,\sigma \in (3, \,\alpha] \cap (3, \,\infty).
     \end{split}
\end{equation*}
The Gronwall inequality, \eqref{p2-1}\eqref{Ineq:velocity} and \eqref{p3-1} imply that
\begin{equation}   \label{Estim:Omega-sigma}
\begin{split}
   \|\omega^\theta(t)\|_{L^\sigma}
& \lesssim
  \left( \|\omega_{0} \|_{L^\sigma} +
  C_{\sigma}\int_{0}^{t}{\|B^\theta\|_{L^\infty}
      \| \Pi \|_{L^\sigma}}\,\mathrm{d}s \right)
  \exp\left\{ \int_{0}^{t}{\big(1+ \|\frac{u^r}{r}\|_{L^\infty}\big)}\,\mathrm{d}s \right\}\\
 & \lesssim (1+t)\exp( t^{\frac{3}{4}} + t^{\frac{5}{4}}),
\qquad \forall\,\sigma \in (3, \,\alpha] \cap (3, \,\infty).
\end{split}
\end{equation}
In particular, if $ \alpha = \infty $,
letting $ \sigma \to \infty $ in \eqref{Estim:Omega-sigma},
we can also get that  \eqref{Estim:Omega-sigma} holds
for all $ \sigma \in (3, \,\alpha] $.

Then we deduce from \eqref{Estim:Omega-sigma} and
Lemma \ref{Lem:grad-curl} that
\[
   \nabla u \in L^\infty([0, \,t]; \,L^{\sigma}(\mathbb{R}^3)).
\]
Note that $ \sigma > 3 $, by the Sobolev embedding
$ W^{1, \,\sigma}(\mathbb{R}^3) \hookrightarrow C_{0}(\mathbb{R}^3)  $, the space of continuous functions tending to $ 0 $ at infity
(see Theorem 6.7 of \cite{LT2007}), we know
\[
    u \in L^\infty([0, \,t]; \,L^{\infty}(\mathbb{R}^3)) .
\]
\bigbreak

\noindent{\bf\textit{Step 5. $L^1([0, t]; \operatorname{Lip}(\mathbb{R}^3))$ estimate for $u$.}}

Rewriting the equation for vorticity $ \omega =\operatorname{curl} u $ as follows
\begin{equation}\label{p5-1}
     \begin{split}
   \partial_t \omega -\Delta\omega=-\operatorname{curl}(u\cdot \nabla u) - \partial_z(\Pi B^\theta e_\theta).
     \end{split}
\end{equation}

Let $q\in \mathbb{N}$ and $\omega_q :=\Delta_q \omega$. Then localizing in frequency to the vorticity equation \eqref{p5-1} and applying Duhamel formula, we know
\begin{equation*}
     \begin{split}
   \omega_q = e^{t\Delta}\omega_q(0)
- \int_0^t e^{{(t-\tau)}\Delta}\Delta_q \left(\operatorname{curl}(u\cdot \nabla u)\right)(s) \, \mathrm{d}s
 - \int_0^t e^{{(t-s)}\Delta}\Delta_q \left(\partial_z(\Pi B^\theta e_\theta)\right)(s) \, \mathrm{d}s.
     \end{split}
\end{equation*}
By the estimate, see \cite{Che1998},
\begin{equation*}
     \begin{split}
   \|e^{t\Delta}\Delta_q f\|_{L^m}\leqslant Ce^{-ct2^{2q}} \|\Delta_q f\|_{L^m}, \quad \forall\; 1 \leqslant m \leqslant \infty,
     \end{split}
\end{equation*}
and the Bernstein inequality, we get
\begin{equation*}
     \begin{split}
   \|\omega_q\|_{L^p}\lesssim e^{-ct 2^{2q}}\|\omega_q(0)\|_{L^p}
   &+2^{2q} \int_0^t e^{-c(t-\tau)2^{2q}}\|\Delta_q (u\otimes u)(s)\|_{L^p} \,\mathrm{d}s\\
   &+2^q \int_0^t e^{-c(t-\tau)2^{2q}} \|\Delta_q(\Pi B^\theta)(s)\|_{L^p} \, \mathrm{d}s.
     \end{split}
\end{equation*}
Then integrating in time and using convolution inequalities yield
\begin{equation*}
     \begin{split}
   \int_0^t \|\omega_q(s)\|_{L^p} \, \mathrm{d}s\lesssim 2^{-2q}\|\omega_q(0)\|_{L^p}
   +\int_0^t \|\Delta_q (u\otimes u)(s)\|_{L^p}  \, \mathrm{d}s
   +2^{-q} \int_0^t \|\Delta_q(\Pi B^\theta)(\tau)\|_{L^p} \, \mathrm{d}s,
     \end{split}
\end{equation*}
which implies that
\begin{equation*}
     \begin{split}
   \int_0^t \|\omega(s)\|_{B^{\frac{3}{p}}_{p, 1}} \, \mathrm{d}s
   &\lesssim \int_0^t \|\Delta_{-1}\omega(s)\|_{L^p} \, \mathrm{d}s
   +\|\omega_0\|_{B^{\frac{3}{p}-2}_{p, 1}}\\
   &\quad +\int_0^t \|(u\otimes u)(s)\|_{B^{\frac{3}{p}}_{p, 1}}  \, \mathrm{d}s
   +\int_0^t \|(\Pi B^\theta)(s)\|_{B^{\frac{3}{p}-1}_{p, 1}} \, \mathrm{d}s.
     \end{split}
\end{equation*}

Taking $ 3 < p \leqslant 6 $,
applying the Bernstein inequality and  \eqref{Ineq:VorEst} to the first term of the right hand side leads to
\begin{equation*}
   \int_0^t \|\Delta_{-1}\omega(s)\|_{L^p} \, \mathrm{d}s
 \lesssim t \|\omega\|_{L^\infty([0, t]; L^2(\mathbb{R}^3))}
\lesssim (1+t)^\frac{3}{2}\exp (t^\frac{3}{4}+t^\frac{5}{4}).
\end{equation*}
For the second term of the right hand side,  Besov embedding implies
\begin{equation*}
     \begin{split}
   \|\omega_0\|_{B^{\frac{3}{p}-2}_{p, 1}}\lesssim \|u_0\|_{B^{\frac{3}{p}-1}_{p, 1}}
   \lesssim \|u_0\|_{B^{\frac{1}{2}}_{2, 1}} \lesssim \|u_0\|_{H^1}.
     \end{split}
\end{equation*}
Applying Besov embedding, law products and interpolation inequality, we have
\begin{equation*}
     \begin{split}
    \|u\otimes u\|_{B^{\frac{3}{p}}_{p, 1}}
   &\lesssim \|u\otimes u\|_{B^{\frac{3}{2}}_{2, 1}}
    \lesssim \|u\|_{L^\infty}\|u\|_{B^{\frac{3}{2}}_{2, 1}}\\
    &\lesssim \|u\|_{L^\infty}\|u\|_{L^2}+ \|u\|_{L^\infty}\|\nabla u\|_{B^{\frac{1}{2}}_{2, 1}}\\
    &\lesssim \|u\|_{L^\infty}\|u\|_{L^2}+ \|u\|_{L^\infty}\|\nabla u\|_{L^2}^{\frac{1}{2}}\|\nabla^2 u\|_{L^2}^{\frac{1}{2}},
     \end{split}
\end{equation*}
which together with \eqref{c2-1} implies
\begin{align*}
    \|u \otimes u\|_{L^1([0, t], B^{\frac{3}{p}}_{p, 1}(\mathbb{R}^3))}
    \lesssim &\, t^\frac{1}{2}\|u\|_{L^2([0, t], L^\infty(\mathbb{R}^3))}\|u\|_{L^\infty([0, t], L^2(\mathbb{R}^3))}\\
    &  + \|u\|_{L^\frac{4}{3}([0, t], L^\infty(\mathbb{R}^3))}\|\nabla u\|_{L^\infty([0, t], L^2(\mathbb{R}^3))}^{\frac{1}{2}}\|\nabla^2 u\|_{L^2([0, t], L^2(\mathbb{R}^3))}^{\frac{1}{2}}\\
    \lesssim &\, t^\frac{1}{2}\|u\|_{L^2([0, t], L^\infty(\mathbb{R}^3))}\|u\|_{L^\infty([0, t], L^2(\mathbb{R}^3))}\\
    &  + t^{\frac{1}{4}}\|u\|_{L^2([0, t], L^\infty(\mathbb{R}^3))}\|\nabla u\|_{L^\infty([0, t], L^2(\mathbb{R}^3))}^{\frac{1}{2}}\|\nabla^2 u\|_{L^2([0, t], L^2(\mathbb{R}^3))}^{\frac{1}{2}}\\
    \lesssim &\, (1+t)^{\frac{5}{4}}\exp(t^{\frac{3}{4}} + t^{\frac{5}{4}}).
\end{align*}
We use the embedding $L^p\hookrightarrow B^{\frac{3}{p}-1}_{p, 1}$, for $p >3$,
\begin{equation*}
     \begin{split}
   \|\Pi B^\theta\|_{B^{\frac{3}{p}-1}_{p, 1}} \lesssim \|\Pi B^\theta\|_{L^p}\lesssim \|B^\theta\|_{L^\infty}\|\Pi\|_{L^p},
     \end{split}
\end{equation*}
which gives for $ 3 < p \leqslant 6 $
\begin{equation*}
     \begin{split}
   \int_0^t \|\Pi B^\theta(s)\|_{B^{\frac{3}{p}-1}_{p, 1}} \, \mathrm{d}s
 \lesssim \|\Pi_0\|_{L^p}\int_0^t \| B^\theta(s) \|_{ L^\infty}\, \mathrm{d}s
   \lesssim t\exp (t^\frac{3}{4}+t^\frac{5}{4}).
     \end{split}
\end{equation*}

Hence, we have
\begin{equation*}
     \begin{split}
  \int_0^t \|\omega(s)\|_{B^{\frac{3}{p}}_{p, 1}} \, \mathrm{d}s
  \lesssim (1+t)^{\frac{3}{2}}\exp (t^\frac{3}{4}+t^\frac{5}{4}).
     \end{split}
\end{equation*}
and then the Besov embedding $B^{\frac{3}{p}+1}_{p, 1}\hookrightarrow W^{1, \infty}$ implies
\begin{equation}  \label{Lip-u}
     \begin{split}
  \int_0^t \|\nabla u(s)\|_{L^\infty}\, \mathrm{d}s
  \lesssim \int_0^t \|u(s)\|_{B^{\frac{3}{p}+1}_{p, 1}} \, \mathrm{d}s
  \lesssim \int_0^t\|\omega(s)\|_{B^{\frac{3}{p}}_{p, 1}}\, \mathrm{d}s
 \lesssim (1+t)^{\frac{3}{2}}\exp (t^\frac{3}{4}+t^\frac{5}{4}).
     \end{split}
\end{equation}

\noindent{\bf\textit{Step 6. $  L^{2}([0, t]; \,\operatorname{Lip}(\mathbb{R}^3))$ estimate
of $ B $.}}

We first rewrite the second equation in \eqref{Hall-MHD} as
\begin{equation}\label{p7-1}
  \partial_t B + u \cdot \nabla B = \Delta B + B \cdot \nabla u
+ 2\frac{B^{\theta}}{r}\partial_{z}B^{\theta}.
\end{equation}

Taking $ L^{2}-$inner product of \eqref{p7-1} with $ -\Delta B $, and then the H\"older
inequality gives
\begin{equation*}
\begin{split}
    \frac{1}{2}\frac{\mathrm{d}}{\mathrm{d}t}\|\nabla B\|_{L^2}^2
   + \| \Delta B \|_{L^2}^2
& = \int{(u \cdot \nabla B - B \cdot \nabla u
    - 2 \frac{B^{\theta}}{r}\partial_{z}B^{\theta}) \cdot \Delta B}
   \,\mathrm{d}x \\
& \leqslant \big( \| u \|_{L^{\infty}} \| \nabla B \|_{L^{2}}
      + \| \nabla u \|_{L^{\infty}} \| B \|_{L^{2}}  +
      2 \| \frac{B^{\theta}}{r} \|_{L^{6}} \| \partial_{z}B^{\theta} \|_{L^{3}}\big) \| \Delta B \|_{L^{2}}\\
& \leqslant \big( \| u \|_{L^{\infty}} \| \nabla B \|_{L^{2}}
      + \| \nabla u \|_{L^{\infty}} \| B \|_{L^{2}}  +
      2 \| \frac{B^{\theta}}{r} \|_{L^{6}} \| \nabla B \|_{L^{3}}\big) \| \Delta B \|_{L^{2}},
\end{split}
\end{equation*}
In view of $ \| \nabla B \|_{L^{3}} \leqslant \| \nabla B \|_{L^{2}}^{\frac{1}{2}}\| \nabla B \|_{L^{6}}^{\frac{1}{2}} $ and $ H^1(\mathbb{R}^3)\hookrightarrow L^6(\mathbb{R}^3) $,
we obtain
\begin{equation*}
\begin{split}
&\frac{1}{2}\frac{\mathrm{d}}{\mathrm{d}t}\|\nabla B\|_{L^2}^2
   + \| \Delta B \|_{L^2}^2   \\
 \lesssim \,&\,  \| u \|_{L^{\infty}} \| \nabla B \|_{L^{2}} \| \Delta B \|_{L^{2}}
      + \| \nabla u \|_{L^{\infty}} \| B \|_{L^{2}}  \| \Delta B \|_{L^{2}}
      + \| \frac{B^{\theta}}{r} \|_{L^{6}} \| \nabla B \|_{L^{2}}^{\frac{1}{2}}
        \| \Delta B \|_{L^{2}}^{\frac{3}{2}}\\
 \lesssim  \,&\, \frac{1}{2} \| \Delta B \|_{L^{2}}^{2}
      + \| u \|_{L^{\infty}}^{2} \| \nabla B \|_{L^{2}}^{2}
      + \| \nabla u \|_{L^{\infty}}^{2} \| B \|_{L^{2}}^{2}  +
        \| \frac{B^{\theta}}{r} \|_{L^{6}}^{4} \| \nabla B \|_{L^{2}}^{2}.
\end{split}
\end{equation*}
Thus,
\begin{equation*}
     \begin{split}
  \frac{\mathrm{d}}{\mathrm{d}t}\|\nabla B\|_{L^2}^{2}
  + \| \Delta B \|_{L^2}^2
 \lesssim \left(\| u\|_{L^\infty}^{2} + \|\frac{B}{r}\|_{L^6}^4 \right)
   \|\nabla B\|_{L^2}^{2} + \| B \|_{L^2}^2   \|\nabla u\|_{L^\infty}^{2}.
     \end{split}
\end{equation*}
Thanks to Gronwall's inequality, we obtain
\begin{equation}    \label{H1-Est-B}
     \begin{split}
 & \|\nabla B(t)\|_{L^2}^{2} + \int_{0}^{t}{\|\Delta B(s)\|_{L^2}^{2}}\mathrm{d}s \\
 \lesssim \,& \,
    \left( \|\nabla B_0\|_{L^2}^{2} +
   \int_0^t  \| B(s) \|_{L^2}^{2}\| \nabla u(s) \|_{L^\infty}^{2} \mathrm{d}s  \right)
   \exp\left( \int_0^t \big(\| u(s) \|_{L^\infty} + \|\frac{B}{r}(s)\|_{L^6}^{4}\big)\mathrm{d}s\right) \\
\lesssim \,&\, \big(C_{0} + C_{0}(1 + t)^{\frac{3}{2}}
\exp(t^{\frac{3}{4}} + t^{\frac{5}{4}}) \big)
   \exp\big( (1 + t)^{\frac{3}{2}}
\exp(t^{\frac{3}{4}} + t^{\frac{5}{4}}) \big) \\
\lesssim \,&\, \exp\left((1+t)^{\frac{7}{4}}
\exp(t^{\frac{3}{4}} + t^{\frac{5}{4}}) \right),
     \end{split}
\end{equation}
where we have used
\eqref{p2-1-2}\eqref{p2-2}\eqref{Ineq:velocity}\eqref{Lip-u} and the Sobolev embedding
$ H^1(\mathbb{R}^3)\hookrightarrow L^6(\mathbb{R}^3) $.
Therefore
\begin{equation} \label{H2Est-B}
   \| B \|_{L^{\infty}([0,\; t]; \,H^{1}(\mathbb{R}^{3}))}
 + \| B \|_{L^{2}([0,\; t]; \,H^{2}(\mathbb{R}^{3}))}
  \lesssim  \| (u_{0}, \,B_{0}) \|_{L^{2}}
    + \exp\left((1+t)^{\frac{7}{4}} \exp t^{\frac{5}{4}} \right).
\end{equation}

Set
\begin{equation*}
   \gamma = \gamma(\alpha) = 
\begin{cases}
\, \frac{6 \alpha}{6 + \alpha}, 
    & \quad \alpha \in (6, \,\infty),\\
\, 6, & \quad \alpha = \infty. 
\end{cases}
\end{equation*}
Then we can easily find that $ \gamma > 3 $, 
\begin{align*}
   \| B \cdot \nabla u \|_{L^2([0,\,t];\, L^{\gamma}(\mathbb{R}^3))} 
& \leqslant 
  \| B \|_{L^\infty([0,\,t];\, L^{\alpha}(\mathbb{R}^3))} 
  \| \nabla u \|_{L^2([0,\,t];\, L^{6}(\mathbb{R}^3))} \\
& \lesssim 
  \| B \|_{L^\infty([0,\,t];\, L^{2}(\mathbb{R}^3))}^
      {\frac{2}{\alpha}}
  \| B \|_{L^\infty([0,\,t];\, L^{\infty}(\mathbb{R}^3))}^
     {1 - \frac{2}{\alpha}} 
  \| \nabla^{2} u \|_{L^2([0,\,t];\, L^{2}(\mathbb{R}^3))} \\
& \lesssim (1 + t)\exp(t^{\frac{3}{4}} + t^{\frac{5}{4}}),
\end{align*}
\begin{align*}
   \| u \cdot \nabla B \|_{L^2([0,\,t];\, L^{\gamma}(\mathbb{R}^3))} 
& \leqslant 
  \| u \|_{L^\infty([0,\,t];\, L^{\alpha}(\mathbb{R}^3))} 
  \| \nabla B \|_{L^2([0,\,t];\, L^{6}(\mathbb{R}^3))} \\
& \lesssim 
 \| u \|_{L^\infty([0,\,t];\, L^{2}(\mathbb{R}^3))}^{\frac{2}{\alpha}} 
 \| u \|_{L^\infty([0,\,t];\, L^{\infty}(\mathbb{R}^3))}^{1 - \frac{2}{\alpha}} 
\| \Delta B \|_{L^2([0,\,t];\, L^{2}(\mathbb{R}^3))} \\
& \lesssim 
\exp\left((1+t)^{\frac{7}{4}}
\exp(t^{\frac{3}{4}} + t^{\frac{5}{4}}) \right),
\end{align*}
and
\begin{align*}
   \| \frac{B^{\theta}}{r} \partial_{z}B^{\theta} \|_{L^2([0,\,t];\, L^{\gamma}(\mathbb{R}^3))} 
& \leqslant 
  \| \frac{B^{\theta}}{r} \|_{L^\infty([0,\,t];\, L^{\alpha}(\mathbb{R}^3))} 
  \| \nabla B^{\theta} \|_{L^2([0,\,t];\, L^{6}(\mathbb{R}^3))}
   \\
& \lesssim 
  \| \Pi \|_{L^\infty([0,\,t];\, L^{\alpha}(\mathbb{R}^3))} 
  \| \Delta B \|_{L^2([0,\,t];\, L^{2}(\mathbb{R}^3))} \\
& \lesssim 
 \exp\left((1+t)^{\frac{7}{4}}
\exp(t^{\frac{3}{4}} + t^{\frac{5}{4}}) \right).
\end{align*}
Therefore
\begin{equation*}
    \partial_{t}B - \Delta B = B \cdot \nabla u - u \cdot \nabla B
    + 2 \frac{B^{\theta}}{r}\partial_{z}B^{\theta}
\in L^{2}([0, \,t]; \,L^{\gamma}(\mathbb{R}^{3})).
\end{equation*}
By the regularity theory of the heat equation, we deduce that
\begin{equation*}
   \Delta B \in L^{2}([0, \,t]; \,L^{\gamma}(\mathbb{R}^{3})).
\end{equation*}

Since $ \gamma > 3 $
for all $ \alpha \in (6,\,\infty] $, 
by the Sobolev embedding
$ W^{1, \,\gamma}(\mathbb{R}^3) \hookrightarrow C_{0}(\mathbb{R}^3)  $, we know that
\begin{equation}  \label{nablaB}
  \nabla B \in L^{2}([0, t]; L^{\infty}(\mathbb{R}^{3})).
\end{equation}

\noindent{\bf\textit{Step 7. $  L^{1}([0, t]; L^{2}(\mathbb{R}^3)) $ estimate
of $ \nabla^{3} u $.}}

Note that the equation of vorticity
\begin{equation*}
     \begin{split}
   \partial_t \omega -\Delta\omega=-\operatorname{curl}(u\cdot \nabla u) - \partial_z(\Pi B^\theta ),
     \end{split}
\end{equation*}
we obtain from the regularity theory of the heat equation that
\begin{equation*}
     \begin{split}
   \|\Delta \omega\|_{L^1([0, t]; L^2(\mathbb{R}^3))}
   &\,
\lesssim \| \operatorname{curl}(u\cdot \nabla u)\|_{L^1([0, t]; L^2(\mathbb{R}^3))} + \|\partial_z(\Pi B^\theta )\|_{L^1([0, t]; L^2(\mathbb{R}^3))}.
     \end{split}
\end{equation*}
It deduces from Lemma \ref{lem-Besov} that
\begin{equation*}
     \begin{split}
 \| \operatorname{curl}(u\cdot \nabla u)\|_{L^2(\mathbb{R}^3))}\leqslant \|u\cdot \nabla u\|_{H^1}
\lesssim \|u\|_{L^\infty}\|\nabla u\|_{H^1} +\|u\|_{H^1}\|\nabla u\|_{L^\infty},
     \end{split}
\end{equation*}
and
\begin{equation*}
     \begin{split}
 \|\partial_z(\Pi B^\theta )\|_{L^2(\mathbb{R}^3)}
& \leqslant \|B^\theta \partial_z \Pi \|_{L^2(\mathbb{R}^3)} +\|\Pi \partial_z B^\theta \|_{L^2(\mathbb{R}^3)}\\
 &\leqslant \|B^\theta \|_{L^\infty(\mathbb{R}^3)}\|\nabla \Pi \|_{L^2(\mathbb{R}^3)}+ \|\Pi\|_{L^2(\mathbb{R}^3)}\|\nabla B^\theta \|_{L^\infty(\mathbb{R}^3)}.
     \end{split}
\end{equation*}
Thus, by \eqref{p2-1-2}\eqref{p3-1}\eqref{c2-1}\eqref{Ineq:L2Linf} and \eqref{nablaB}, we have
\begin{align*}
    &\|\Delta \omega\|_{L^1([0, t]; L^2(\mathbb{R}^3))}\\
     \lesssim &\,  \| \operatorname{curl}(u\cdot \nabla u)\|_{L^1([0, t]; L^2(\mathbb{R}^3))} + \|\partial_z(\Pi B^\theta )\|_{L^1([0, t]; L^2(\mathbb{R}^3))}\\
     \lesssim &\,  \|u\|_{L^2([0, t]; L^\infty(\mathbb{R}^3))}\|\nabla u\|_{L^2([0, t]; H^1(\mathbb{R}^3))} +
   \|u\|_{L^\infty([0, t]; H^1(\mathbb{R}^3))}\|\nabla u\|_{L^1([0, t]; L^\infty(\mathbb{R}^3))}\\
   &\quad + t^\frac{1}{2}\,\|B^\theta\|_{L^\infty([0, t]; L^\infty(\mathbb{R}^3))}\|\nabla \Pi\|_{L^2([0, t]; L^2(\mathbb{R}^3))}
   + t^\frac{1}{2} \,\|\Pi_0\|_{L^2}\|\nabla B^\theta \|_{L^2([0, t]; L^\infty(\mathbb{R}^3))}\\
\lesssim  &\,  \exp\left((1+t)^2 \exp( t^{\frac{3}{4}} + t^{\frac{5}{4}}) \right).
\end{align*}
Combining \eqref{Ineq:VorEst}, we get
\begin{align*}
  \| \omega \|_{L^1([0, t]; H^2(\mathbb{R}^3))}
& \lesssim
   \| \omega\|_{L^1([0, t]; L^2(\mathbb{R}^3))}
 + \| \nabla \omega\|_{L^1([0, t]; L^2(\mathbb{R}^3))}
 + \|\Delta \omega\|_{L^1([0, t]; L^2(\mathbb{R}^3))} \\
& \lesssim
  \exp\left((1+t)^2 \exp( t^{\frac{3}{4}} + t^{\frac{5}{4}}) \right),
\end{align*}
which gives
\begin{equation}   \label{Est-grad3u}
   \int_0^t \|\nabla^3 u(s)\|_{L^2}\, \mathrm{d} s
  \lesssim \int_0^t \|\omega(s)\|_{H^2}\, \mathrm{d}s
  \lesssim \exp\left((1+t)^{2} \exp( t^{\frac{3}{4}} + t^{\frac{5}{4}}) \right).
\end{equation}

\noindent{\bf\textit{Step 8. $  L^{\infty}([0, t]; H^{2}(\mathbb{R}^3))$ estimate
of $ B $.}}

Applying the operator $\nabla^2$ to the second equation in \eqref{Hall-MHD} leads to
\begin{equation*}
    \begin{split}
    \partial_t \nabla^2 \!B + u\cdot \nabla\nabla^2 \!B - B \cdot \nabla\nabla^2 u
    +\nabla^{2}\!\operatorname{curl}(\operatorname{curl}B \times B) =
     [\nabla^2, B\!\cdot\!\nabla]u -[\nabla^2, u\!\cdot\!\nabla]B + \Delta\nabla^{2}\!B.
   \end{split}
\end{equation*}

Taking the $L^2$ inner product of the above equation
with $ \nabla^2 B $, we obtain
\begin{equation*}
     \begin{split}
   \frac{1}{2}\frac{\mathrm{d}}{\mathrm{d}t} \|\nabla^2 B\|_{L^2}^2
       + \|\nabla^{3} B\|_{L^2}^2
   &=\int B\cdot \nabla\nabla^2 u : \nabla^2 B\, \mathrm{d}x
       -\int[\nabla^2, u\cdot\nabla]B : \nabla^2 B\, \mathrm{d}x\\
   &\quad  + \int[\nabla^2, B\cdot\nabla]u : \nabla^2 B\, \mathrm{d}x
       - \int{\nabla^{2}\!\operatorname{curl}(\operatorname{curl}B \times B) : \nabla^2 B}
       \mathrm{d}x\\
   &:=\sum_{i=1}^{4} J_i.
     \end{split}
\end{equation*}
Next we estimate $ J_i $ term by term.
For $J_1$, we deduce from H\"{o}lder's inequality and Young's inequality that
\begin{align*}
     |J_1|&\leqslant \|B\|_{L^\infty}\|\nabla^3 u\|_{L^2}\|\nabla^2 B\|_{L^2}
     \lesssim \| B \|_{L^\infty}^2 \|\nabla^3 u\|_{L^2}
     + \|\nabla^3 u\|_{L^2}\|\nabla^2 B\|_{L^2}^2,
\end{align*}

Thanks to the interpolation estimate $\|f\|_{L^3}\lesssim \|f\|_{L^2}^{\frac{1}{2}}\|\nabla f\|_{L^2}^{\frac{1}{2}}$, Lemma \ref{lem:Commun}, and Young's inequality, we get
\begin{equation*}
     \begin{split}
     |J_2|&\leqslant \|[\nabla^2, u\cdot\nabla]B\|_{L^2}\|\nabla^2 B\|_{L^2}\\
     &\lesssim \left(\|\nabla u\|_{L^\infty}\|\nabla^2 B\|_{L^2}+ \|\nabla^2 u\|_{L^3}\|\nabla B\|_{L^6}\right)
     \|\nabla^2 B\|_{L^2}\\
     &\lesssim \|\nabla u\|_{L^\infty}\|\nabla^2 B\|_{L^2}^2
     + \|\nabla^2 u\|_{L^2}^{\frac{1}{2}}\|\nabla^3 u\|_{L^2}^{\frac{1}{2}}\|\nabla^2 B\|_{L^2}^{2}\\
     &\lesssim \Big(\|\nabla u\|_{L^\infty}
   + \|\nabla^2 u\|_{L^2} + \|\nabla^3 u\|_{L^2}\Big)\|\nabla^2 B\|_{L^2}^2.
     \end{split}
\end{equation*}
Similarly,
\begin{equation*}
     \begin{split}
     |J_3|&\leqslant \|[\nabla^2, B\cdot\nabla]u\|_{L^2}\|\nabla^2 B\|_{L^2}\\
     &\lesssim \left(\|\nabla B\|_{L^6}\|\nabla^2 u\|_{L^3}+ \|\nabla u\|_{L^\infty}\|\nabla^2 B\|_{L^2}\right)
     \|\nabla^2 B\|_{L^2}\\
     &\lesssim \Big(\|\nabla u\|_{L^\infty}
   + \|\nabla^2 u\|_{L^2} + \|\nabla^3 u\|_{L^2}\Big)\|\nabla^2 B\|_{L^2}^2.
     \end{split}
\end{equation*}
For $ J_{4} $, we have
\begin{align*}
   J_{4}  &=  - \sum_{i, j = 1}^{3}\int{
  \partial_{i}\partial_{j}\!\operatorname{curl}(\operatorname{curl}B \times B) \cdot
    \partial_{i}\partial_{j}B}\,\mathrm{d}x   \\
&    = - \sum_{i, j = 1}^{3}   \int{
  \partial_{i}\partial_{j}\!(\operatorname{curl}B \times B) \cdot
    \partial_{i}\partial_{j} \operatorname{curl}B}\,\mathrm{d}x   \\
&    = - \sum_{i, j = 1}^{3} \int{ \left(
  \partial_{i}\partial_{i} (\operatorname{curl}B \times B)
    - \partial_{i}\partial_{i}\!\operatorname{curl}B \times B \right)
    \cdot \partial_{j}\partial_{j}\!\operatorname{curl}B}\,\mathrm{d}x,
\end{align*}
i.e.,
\begin{align*}
  J_{4} &  =   -\int{ \left(
  \Delta (\operatorname{curl}B \times B)
    - \Delta\operatorname{curl}B \times B \right)
    : \Delta\operatorname{curl}B}\,\mathrm{d}x \\
  & =  \int{ \left(
  \Delta (B \times \operatorname{curl}B)
    - B \times (\Delta\operatorname{curl}B ) \right)
    : \Delta\operatorname{curl}B}\,\mathrm{d}x \\
  & \leqslant
   \| \Delta (B \times \operatorname{curl}B )
    - B \times \Delta\operatorname{curl}B  \|_{L^{2}}  \| \Delta\operatorname{curl}B \|_{L^{2}}.
\end{align*}
Thus, it deduce from Lemma \ref{lem:Commun} that
\begin{align*}
   | J_{4} |
& \lesssim  (\| \nabla B \|_{L^{\infty}} \| \nabla \operatorname{curl}B  \|_{L^{2}}
    + \| \nabla^{2} B \|_{L^{2}}
   \| \operatorname{curl}B  \|_{L^{\infty}})  \| \nabla^{3}B \|_{L^{2}}  \\
&  \lesssim  \| \nabla B \|_{L^{\infty}} \| \nabla^{2}B \|_{L^{2}}
    \| \nabla^{3}B \|_{L^{2}} \\
& \lesssim  \frac{1}{2}\| \nabla B \|_{L^{\infty}}^{2}  \| \nabla^{2} B \|_{L^{2}}^{2}
       + \frac{1}{2} \| \nabla^{3}B \|_{L^2}^{2}
\end{align*}

Collecting all the above estimates together, we arrive at
\begin{equation*}
     \begin{split}
  \frac{\mathrm{d}}{\mathrm{d}t} \|\nabla^2 B\|_{L^2}^2 + \| \nabla^{3}B \|_{L^2}^{2}
\lesssim \,&  \left(\|\nabla u\|_{L^\infty} +\|\nabla^{2} u\|_{L^2}
   + \|\nabla^{3} u\|_{L^2} + \|\nabla B\|_{L^\infty}^2   \right)\|\nabla^2 B\|_{L^2}^2 \\
   & \, + \| B \|_{L^\infty}^2  \|\nabla^3 u\|_{L^2},
     \end{split}
\end{equation*}
which together with
\eqref{c2-1}\eqref{Lip-u}\eqref{H1-Est-B}\eqref{H2Est-B}\eqref{Est-grad3u}
and Gronwall's inequality yields
\begin{equation*}
     \begin{split}
  &\|\nabla^2 B(t)\|_{L^2}^2 + \int_{0}^{t}{\|\nabla^3 B(s)\|_{L^2}^2}\mathrm{d}s\\
   \lesssim &\, \exp \left\{ \!\int_0^t \Big(\|\nabla u(s)\|_{L^\infty}
    + \|\nabla^{2} u(s)\|_{L^2}
  + \|\nabla^{3} u(s)\|_{L^2} + \|\nabla B(s)\|_{L^\infty}^2 \Big) \,
     \mathrm{d}s \right\}\\
  &\, \times \left( \|B_0\|_{H^2}^2
   + \| B \|_{L^{\infty}([0,\,t];\; L^\infty(\mathbb{R}^{3}))}^2\int_0^t {  \|\nabla^3 u\|_{L^2}}\, \mathrm{d}s  \right)\\
 \leqslant &\, C(t, \,\| u_{0} \|_{H^{1}}, \,\| B_{0} \|_{H^{2}}).
     \end{split}
\end{equation*}
\end{proof}

\begin{proof}[Proof of Theorem \ref{thm-main}]
With the Proposition \ref{prop-1}, by taking advantage of the local existence and uniqueness result, i.e., Lemma \ref{lem-0}, we complete the proof of Theorem \ref{thm-main}.
\end{proof}

\bigbreak

\noindent {\bf Acknowledgments.}The authors would like to thank professor Guilong Gui for his valuable
comments and suggestions. The work is partially supported by the National Natural Science Foundation of China under the grants 11571279, 11601423 and 11931013.

\end{document}